\definecolor{darkblue}{rgb}{0,0,.5}
\newcommand{\Cb}{\mathbb{C}}
\newcommand{\Nb}{\mathbb{N}}
\newcommand{\Zb}{\mathbb{Z}}
\newcommand{\Xb}{\textbf{\upshape X}}
\newcommand{\Yb}{\textbf{\upshape Y}}
\newcommand{\Kc}{\mathcal{K}}
\newcommand{\Lc}{\mathcal{L}}
\newcommand{\Pc}{\mathcal{P}}
\DeclareMathOperator{\BDO}{BDO}
\DeclareMathOperator{\coker}{coker}
\DeclareMathOperator{\diag}{diag}
\DeclareMathOperator{\diam}{diam}
\DeclareMathOperator{\im}{im}
\DeclareMathOperator{\op}{op}
\DeclareMathOperator{\rk}{rank}
\DeclareMathOperator{\plimn}{\underset{n\to\infty}{\Pc-lim \,}}
\DeclareMathOperator{\supp}{supp}
\providecommand{\lb}[1]{\Lc(#1)}
\providecommand{\lc}[1]{\Kc(#1)}
\providecommand{\pb}[1]{\Lc(#1,\Pc)}
\providecommand{\pc}[1]{\Kc(#1,\Pc)}
\newtheorem{thm}{Theorem}[section]
\newtheorem{prop}[thm]{Proposition}
\newtheorem{cor}[thm]{Corollary}
\theoremstyle{definition}
\newtheorem{defn}[thm]{Definition}
\newtheorem{rem}[thm]{Remark}
\newtheorem{ex}[thm]{Example}
\numberwithin{equation}{section}
\begin{document}

\title{On Semi-Fredholm Band-Dominated Operators}
\date{}
\author{Markus Seidel}
\maketitle

\begin{abstract}
In this paper we study the semi-Fredholm property of band-dominated operators $A$ 
and prove that it already implies the Fredholmness of $A$ in all cases 
where this is not disqualified by obvious reasons. Moreover, this observation
is applied to show that the Fredholmness of a band-dominated 
operator already follows from the surjectivity of all its limit operators.

\medskip

\textbf{MSC:} Primary 47A53; Secondary 47L10, 47B36, 46E40

\medskip

\textbf{Keywords:} Semi-Fredholm operator, Band-dominated operator, Limit operator, 
	Operator spectrum

\end{abstract}

\section{Introduction}
We discuss the question whether or when the semi-Fredholm property of a 
band-dominated operator on $\Xb=l^p(\Zb^N,X)$ already yields its Fredholmness. 
For classes of Toeplitz operators this is well known (see e.g. \cite[2.29, 2.47]{Analysis} 
and \cite[4.47, 4.108]{NumAna}). In particular, the abstract theory of continuous 
functions of shifts in \cite{GoFe} and its applications e.g. \cite[Section 4]{NumAna} 
typically provide results of such type.

Here, as usual, $l^p(\Zb^N,X)$, $1\leq p<\infty$, denotes the space of all 
functions/generalized sequences  $x=(x_n):\Zb^N\to X$ with values in a Banach 
space $X$ such that $\|x\|_p^p:=\sum_{n\in\Zb^N}\|x_n\|_X^p<\infty$.
Further $l^\infty(\Zb^N,X)$ is the space of bounded sequences, i.e. 
$\|x\|_\infty:=\sup\{\|x_n\|_X:n\in\Zb^N\}<\infty$, and $l^0(\Zb^N,X)\subset l^\infty(\Zb^N,X)$ 
is the subspace of all sequences with $\|x_n\|_X\to 0$ as $|n|\to\infty$.

In order to define band-dominated operators two basic families of generators are 
required: 
\begin{itemize}
\item Every sequence $a=(a_n)\in l^\infty(\Zb^N,\lb{X})$ defines a so called
multiplication operator $aI$ on $\Xb$ by the rule $aI:(x_n)\mapsto (a_nx_n)$.
\item Further, for every $k\in\Zb^N$, define the so-called shift operator $V_k$ on $\Xb$
as $V_k(x_n):=(x_{n-k})$.  
\end{itemize}
Now, finite combinations $\sum_\alpha a_\alpha V_\alpha$ of 
these generators are usually referred to as band operators, and the elements in the
closure $\BDO(\Xb)$ (w.r.t. the operator norm) of the set of all band operators are called
band-dominated operators. This set $\BDO(\Xb)$ actually forms a closed subalgebra of $\lb{\Xb}$, 
the Banach algebra of all bounded linear operators on $\Xb$, and covers a lot of more particular 
operator classes such as Toeplitz and Laurent operators with continuous symbols, discrete 
Schrödinger and Jacobi operators, and by the particular choice $X=L^p[0,1]$ also 
classes of convolution type integral operators 
\cite{LangeR, RochFredTh, RaRoSiL2, LiSi, Standard, LimOps, Marko, LiChW, LiSe, MaSaSe}.

In the present paper we make the following observation:
If $X$ is an infinite-dimensional Banach space or if $N>1$ then the answer to our 
question is ``No'', as the almost trivial counterexamples \ref{E1}, \ref{E2} will show. 
Surprisingly, the answer is ``Yes'' on all $l^p(\Zb,X)$, $p\in \{0\}\cup[1,\infty]$,
$\dim X<\infty$. 

Furthermore, this result will help to simplify the well known Fredholm criteria
for band-dominated operators in terms of limit operators 
\cite{LangeR, LimOps, Marko, LiChW, LiSe}:
In the above mentioned cases, if all limit operators of a band-dominated operator 
$A$ are invertible from one (and the same) side then $A$ is Fredholm.

\medskip

The paper is organized as follows: In Sections \ref{SAppN} and \ref{SPTheory} the 
essential tools and techniques are introduced and shortly discussed: the lower 
approximation numbers of an operator, and appropriate notions of compactness and 
Fredholmness which permit to study our problems on $l^p$ spaces for \textit{all} $p$ 
in an elegant and homogenous way. Section \ref{SSemiF} is devoted to the main theorem
on automatic Fredholmness and Section \ref{SOpSpec} finally addresses 
the characterization via limit operators.

\section{Semi-Fredholm operators and approximation numbers}\label{SAppN}

Let $\Xb$, $\Yb$ be Banach spaces. A (bounded linear) operator $A\in\lb{\Xb,\Yb}$ 
is said to be 
normally solvable if its range $\im A$ is closed. It is well known that $A$ is 
normally solvable if and only if its adjoint $A^*$ is normally solvable. In that 
case for the dimensions of the respective kernels and cokernels it holds that
$\dim\ker A=\dim\coker A^*$ as well as $\dim\coker A=\dim\ker A^*$. These 
observations can be found in any textbook on the subject, e.g. \cite{Kato, Aiena}.

\subparagraph{(Semi-)Fredholm operators}
An operator $A$ is called $\Phi_+$-operator if it is normally solvable
and its kernel is finite-dimensional. 
An operator $A$ is called $\Phi_-$-operator if it is normally solvable
and its cokernel is finite-dimensional.  
Operators in $\Phi_+\cup\Phi_-$ are referred to as semi-Fredholm operators, 
and those in $\Phi:=\Phi_+\cap\Phi_-$ are Fredholm operators.

\medskip

With the above we have that $A$ is $\Phi$ ($\Phi_+$, $\Phi_-$) if and only if 
$A^*$ is $\Phi$ ($\Phi_-$, $\Phi_+$, respectively). Moreover, notice that 
operators $A$ with  $\dim\coker A<\infty$ are automatically normally solvable.

\medskip

Next, we want to continue discussing further tools which permit to study the
Fredholm and semi-Fredholm properties of operators $A \in \lb{\Xb, \Yb}$ and
offer another perspective.

\subparagraph{Lower approximation numbers}
For $A \in \lb{\Xb, \Yb}$ the $m$-th approximation number from the right $s^r_m(A)$ 
and the $m$-th approximation number from the left $s^l_m(A)$ are defined as 
\begin{align*}
	s_m^r(A)& := \inf \{\|A-F\|_{\lb{\Xb,\Yb}}: F\in\lb{\Xb,\Yb}, \dim \ker F \geq m\},\\
	s_m^l(A)& := \inf \{\|A-F\|_{\lb{\Xb,\Yb}}: F\in\lb{\Xb,\Yb}, \dim \coker F \geq m\},
\end{align*}
$(m=0,1, 2,...)$, respectively. It is clear that
$0=s^r_0(A)\leq s^r_1(A)\leq s^r_2(A)\leq ...$ and that the same holds true for
the sequence $(s_m^l(A))_m$.

\subparagraph{Lower Bernstein and Mityagin numbers}
Denote by $U_\Xb$ the closed unit ball in $\Xb$ and by 
\begin{align*}
j(A) & := \sup\{\tau \geq 0 : \|Ax\|\geq \tau\|x\| \text{ for all } x\in\Xb\},\\
q(A) & := \sup\{\tau \geq 0 : A(U_\Xb) \supset \tau U_\Yb\}
\end{align*}
the injection modulus and the surjection modulus of an operator $A\in\lb{\Xb,\Yb}$, 
respectively. Obviously,
the equivalent characterization $j(A)=\inf\{\|Ax\|:x\in\Xb, \|x\|=1\}$ holds, 
and therefore $j(A)$ is often called the minimum modulus or the lower norm of $A$.
From \cite{Pietsch}, B.3.8 we know
\begin{equation}\label{EDualMod}
j(A^*)=q(A) \quad\text{and}\quad q(A^*)=j(A).
\end{equation}
Furthermore, for given closed subspaces $V\subset\Xb$ and $W\subset\Yb$ let 
$A|_V:V\to\Yb$ denote the restriction of $A$ to $V$, and by $\pi_W$ denote the 
canonical map of $\Yb$ onto the quotient $\Yb/W$. Define the lower Bernstein 
and Mityagin numbers by
\begin{align*}
B_m(A) & := \sup\{j(A|_V): \dim \Xb/V < m\},\\
M_m(A) & := \sup\{q(\pi_WA): \dim W < m\}.
\end{align*}
These characteristics have been discussed in \cite{Zeman}, for instance.
Note that the sequences $(B_m(A))$, $(M_m(A))$ are monotonically non-decreasing, too. 
Furthermore by \cite[Equation (2.6)]{SeSi3}
\begin{equation}\label{EDualBM}
M_m(A)=B_m(A^*) \quad\text{for every}\quad m\in \Nb.
\end{equation}

\begin{prop}\label{PANBound} (cf. \cite[Proposition 2.9]{SeSi3})
Let $A\in\lb{\Xb,\Yb}$. Then, for all $m\in\Nb$,
\begin{align*}
\frac{s^r_m(A)}{2^m-1} \leq B_m(A) \leq s^r_m(A),\quad \text{as well as}\quad
\frac{s^l_m(A)}{2^m-1} \leq M_m(A) \leq s^l_m(A).
\end{align*}
\end{prop}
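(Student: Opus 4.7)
The plan is to split each inequality into an easy upper bound and a harder lower bound, the latter proved by induction on $m$; the $M_m$-statement mirrors the $B_m$-one via a primal/dual switch.

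The upper bounds $B_m(A) \leq s^r_m(A)$ and $M_m(A) \leq s^l_m(A)$ follow by a dimension count. For the first, if $\dim \ker F \geq m$ and $\dim \Xb/V < m$, then $V \cap \ker F$ is nonzero, and a unit vector there gives $j(A|_V) \leq \|A - F\|$; taking sup over $V$ and inf over $F$ closes it. For the second, $\dim W < m \leq \dim \coker F$ forces $\pi_W F$ to have nonzero finite-dimensional (hence closed) cokernel, so $q(\pi_W F) = 0$, and the $1$-Lipschitz property of $q$ under perturbation then yields $q(\pi_W A) \leq \|A - F\|$.

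The key step is $s^r_m(A) \leq (2^m - 1) B_m(A)$, proved by induction on $m$ (the $m = 0$ case being vacuous). Fix $\tau > B_m(A)$. Since $B_1(A) = j(A) \leq B_m(A) < \tau$, pick a unit $x \in \Xb$ with $\|Ax\| < \tau$ and, by Hahn--Banach, $\phi \in \Xb^*$ with $\phi(x) = \|\phi\| = 1$. The observation that any subspace of $\ker \phi$ of codimension $< m-1$ in $\ker \phi$ has codimension $< m$ in $\Xb$ gives $B_{m-1}(A|_{\ker\phi}) \leq B_m(A)$. The inductive hypothesis applied to $A|_{\ker\phi}\colon \ker\phi \to \Yb$ supplies $\tilde F$ with $\dim \ker \tilde F \geq m-1$ and $\|A|_{\ker\phi} - \tilde F\| < (2^{m-1} - 1)\tau$, and extending by $Fy := \tilde F(y - \phi(y)\, x)$ produces $F \in \lb{\Xb,\Yb}$ with $Fx = 0$ and $F|_{\ker\phi} = \tilde F$, hence $\dim \ker F \geq m$. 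Splitting $y = \phi(y)x + (y - \phi(y)x)$ and using $\|y - \phi(y)x\| \leq 2\|y\|$ gives
\[
\|(A-F)y\| \leq |\phi(y)|\,\|Ax\| + (2^{m-1}-1)\tau\cdot 2\|y\| \;<\; (2^m - 1)\tau\,\|y\|,
\]
and letting $\tau \downarrow B_m(A)$ closes the step.

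The parallel bound $s^l_m(A) \leq (2^m - 1) M_m(A)$ is handled by the mirror construction in the range. Using (\ref{EDualMod}) and (\ref{EDualBM}), from $q(A) = j(A^*) \leq B_m(A^*) = M_m(A) < \tau$ pick a unit $y^* \in \Yb^*$ with $\|A^* y^*\| < \tau$ and $y_0 \in \Yb$ with $y^*(y_0) = 1$, $\|y_0\| < 1 + \varepsilon$. Replacing $\ker\phi$ by the quotient $\Yb/\mathrm{span}(y_0)$, with quotient map $Q$ and section $\sigma\colon [z] \mapsto z - y^*(z) y_0$ (so $\|\sigma\| \leq 1 + \|y_0\|$), the analogous correspondence of subspaces gives $M_{m-1}(QA) \leq M_m(A)$. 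Induction then yields $\tilde F$ with $\dim \coker \tilde F \geq m-1$, and lifting $F := \sigma \tilde F$ places $\im F$ inside $\ker y^*$, which contributes the extra direction $\mathrm{span}(y_0)$ so that $\dim \coker F \geq m$. Writing $A - F = (I - \sigma Q)A + \sigma(QA - \tilde F)$ and noting $(I - \sigma Q)Ax = y^*(Ax)\,y_0$ yields the bound $(2^m - 1)\tau$ in the limit $\varepsilon \to 0$. The only real obstacle in either induction is bookkeeping the constant: the recursion $c_m = 1 + 2\, c_{m-1}$, $c_0 = 0$, produces $c_m = 2^m - 1$, with the factor $2$ being precisely the norm of the complement map $y \mapsto y - \phi(y)x$ (respectively of the section $\sigma$); once this is recognized there is no further analytical difficulty.
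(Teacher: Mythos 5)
The paper itself gives no proof of this proposition---it is imported verbatim from \cite[Proposition 2.9]{SeSi3}---so there is no in-text argument to compare against; your proof is correct and is essentially the standard inductive rank-one-reduction argument behind the cited result: peel off one kernel direction via a norm-one functional $\phi$ with $\phi(x)=\|\phi\|=1$ (resp.\ one cokernel direction via a norm-one functional $y^*$ and a near-norm-one section $\sigma$), observe $B_{m-1}(A|_{\ker\phi})\leq B_m(A)$ (resp.\ $M_{m-1}(QA)\leq M_m(A)$), and track the recursion $c_m=1+2c_{m-1}$ to get $2^m-1$, with the easy upper bounds coming from the dimension count $V\cap\ker F\neq\{0\}$ and from $q(\pi_WF)=0$. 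The only cosmetic point is the strict inequality $\|A|_{\ker\phi}-\tilde F\|<(2^{m-1}-1)\tau$ at the step $m=1$, where the right-hand side is $0$; there one simply takes $\tilde F=A|_{\ker\phi}$ (or inserts an extra $\delta>0$ sent to $0$ at the end), after which the argument closes exactly as you describe.
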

Also notice that in case of $\Xb$, $\Yb$ being Hilbert spaces one even has
$s^r_m(A) = B_m(A) = \sigma_m(A)$ and $s^l_m(A) = M_m(A) = \sigma_m(A^*)$, where
$\sigma_m(A)$ denotes the $m$th lower singular value of $A$, resp. (cf. 
\cite[Corollary 2.12]{SeSi3}).
These geometric characteristics and the latter estimates which connect them
now permit to look at and characterize the Fredholm properties of operators by
another perspective.
\begin{cor}\label{CANNormSolv}
Let $A\in\lb{\Xb,\Yb}$.
\begin{enumerate}
\item If $A$ is normally solvable and $m>\dim\ker A$ then $s_m^r(A)>0$.
\item If $A$ is normally solvable and $m>\dim\coker A$ then $s_m^l(A)>0$.
\item If $A$ is normally solvable and $m\leq\dim\ker A\leq\infty$ then $s_m^r(A)=0$.
\item If $A$ is normally solvable and $m\leq\dim\coker A\leq\infty$ then $s_m^l(A)=0$.
\item If $A$ is not normally solvable then $s_m^r(A)=s_m^l(A)=0$ for 
			every $m\in\Nb$.
\end{enumerate}
\end{cor}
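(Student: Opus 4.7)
The plan is to dispose of the five claims via Proposition~\ref{PANBound} together with direct constructions of witnesses for $B_m(A)$ and $M_m(A)$, appealing to the duality \eqref{EDualBM} where convenient.

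I would handle (iii) and (iv) first because they cost nothing: taking $F:=A$ itself gives $\|A-F\|=0$, while $\dim\ker F=\dim\ker A\geq m$ in (iii) and $\dim\coker F=\dim\coker A\geq m$ in (iv); the normal solvability hypothesis is not actually needed for these. For (i) I would exploit that the finite-dimensional space $\ker A$ admits a closed topological complement $V\subset\Xb$ with $\dim\Xb/V=\dim\ker A<m$. The restriction $A|_V$ is then a continuous bijection from $V$ onto the Banach space $\im A$ (closed by hypothesis), and the open mapping theorem yields $j(A|_V)>0$; hence $B_m(A)>0$, and Proposition~\ref{PANBound} gives $s^r_m(A)\geq B_m(A)>0$. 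Case~(ii) is the dual construction: pick a finite-dimensional algebraic complement $W$ of $\im A$ in $\Yb$ with $\dim W=\dim\coker A<m$, note that $\pi_W A:\Xb\to\Yb/W$ is surjective, apply the open mapping theorem to get $q(\pi_W A)>0$, and read off $M_m(A)>0$ and thus $s^l_m(A)>0$.

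For (v) I would argue contrapositively. If $s^r_m(A)>0$ for some $m$, then $B_m(A)>0$ by Proposition~\ref{PANBound}, so some closed $V\subset\Xb$ of finite codimension admits $j(A|_V)>0$; hence $A(V)$ is closed, and writing $\Xb=V\oplus C$ with $\dim C<\infty$ yields $\im A=A(V)+A(C)$, a sum of a closed subspace and a finite-dimensional subspace, which is closed. If instead $s^l_m(A)>0$, then $M_m(A)>0$, and the duality \eqref{EDualBM} gives $B_m(A^*)>0$; applying the previous step to $A^*$ shows that $A^*$, and hence $A$, has closed range. The step I expect to require the most care is this last one, because a direct argument that $q(\pi_W A)>0$ forces $\im A$ to be closed is awkward (finite algebraic codimension does not imply closedness in a Banach space), whereas routing through \eqref{EDualBM} reduces it to the already-settled case of $B_m$.
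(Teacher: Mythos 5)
Your proof is correct, and in two places it takes a genuinely different route from the paper. For (i) you argue exactly as the paper does (closed complement of the finite-dimensional kernel, open mapping theorem, $B_m(A)>0$, then Proposition~\ref{PANBound}). For (iii)/(iv) you simply take $F=A$ in the definition of $s_m^r$, $s_m^l$, which is more direct than the paper's detour through $B_m(A)=0$ and correctly shows normal solvability is not needed there. For (ii) you build the witness $\pi_W A$ directly and invoke the open mapping theorem for the surjection modulus, whereas the paper dualizes everything on the left-hand side at once via $M_m(A)=B_m(A^*)$ and $\dim\coker A=\dim\ker A^*$; both are fine, and yours avoids any appeal to adjoints for (ii). The real divergence is in (v): the paper argues directly, citing the proof of \cite[Theorem 3]{SeSi2} for the existence, when $\im A$ is not closed, of $m$-dimensional subspaces on which $A$ is arbitrarily small in norm, and concludes $B_m(A)\le\delta$ for all $\delta$. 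You instead prove the contrapositive --- $B_m(A)>0$ forces $\im A=A(V)+A(C)$ to be closed as the sum of a closed subspace (since $A|_V$ is bounded below on the closed finite-codimensional $V$) and a finite-dimensional one --- and then handle $s_m^l$ by passing to $A^*$ via \eqref{EDualBM}. This is self-contained and dispenses with the external citation, at the mild cost of the duality detour for the left-hand numbers; note, incidentally, that the ``awkward'' direct argument you sidestep is actually available, since the paper itself records that a bounded operator whose range has finite algebraic codimension is automatically normally solvable (your parenthetical caveat applies to general subspaces, not to operator ranges).
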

\begin{proof}
Let $A$ be normally solvable, $m<\infty$, and $\Xb_1$ be a $m$-dimensional subspace 
of $\ker A$. Then every subspace $V$ of $\Xb$ with $\dim \Xb/V < m$ has a non-trivial 
intersection with $\Xb_1$, hence $B_m(A)=0$.
If $\dim\ker A<m<\infty$ then there exists a complement $\Xb_2$ of $\ker A$ and
$A:\Xb_2\to\im A$ is a bijection between Banach spaces, hence $j(A|_{\Xb_2})>0$. 
Thus, $B_m(A)>0$. The previous proposition then yields the claims 
$\textit{1.}$ and $\textit{3.}$

If $A$ is not normally solvable, then for every $m$ and every $\delta>0$ there is a 
subspace $\Xb_1$ of $\Xb$, $\dim\Xb_1=m$, $\|A|_{\Xb_1}\|<\delta$ (see e.g. the proof 
of \cite[Theorem 3]{SeSi2}). Every subspace $V$ with $\dim \Xb/V < m$ has a non-trivial 
intersection with $\Xb_1$, hence $B_m(A)\leq\delta$. With the previous proposition 
and since $\delta$ is arbitrary, we see that $s_m^r(A)=0$.

For the analogous claims on the approximation numbers from the left we just recall 
that $A$ is normally solvable if and only if $A^*$ is normally solvable, that 
$M_m(A)=B_m(A^*)$, and that in case of normally solvable $A$ it holds that 
$\dim\coker A=\dim\ker A^*$.
\end{proof}

\begin{cor}
Let $A\in\lb{\Xb,\Yb}$.
\begin{enumerate}
\item If $s_m^r(A)>0$ for some $m$ then $A\in\Phi_+$.
\item If $s_m^l(A)>0$ for some $m$ then $A\in\Phi_-$.
\end{enumerate}
\end{cor}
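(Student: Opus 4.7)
The natural approach is to argue by contrapositive, directly invoking the case analysis of Corollary \ref{CANNormSolv}. The two statements are perfectly symmetric (one via $B_m$, the other via $M_m=B_m(\cdot^*)$), so I would handle the first and transfer the second either by the same argument or by duality through \eqref{EDualMod}, \eqref{EDualBM}.

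For claim \textit{1.}, assume $s_m^r(A)>0$ for some fixed $m\in\Nb$. First I would rule out the possibility that $A$ is not normally solvable: by part \textit{5.} of Corollary \ref{CANNormSolv}, non-normal-solvability forces $s_k^r(A)=0$ for every $k$, contradicting the hypothesis. Hence $A$ is normally solvable. Next I would exclude an infinite-dimensional or too-large kernel: part \textit{3.} of Corollary \ref{CANNormSolv} says that whenever $m\leq\dim\ker A\leq\infty$ one has $s_m^r(A)=0$. Since this is again incompatible with $s_m^r(A)>0$, we must have $\dim\ker A<m<\infty$. Combined with normal solvability, this is precisely the definition of $A\in\Phi_+$.

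Claim \textit{2.} follows by the same three-line dichotomy, using parts \textit{5.} and \textit{4.} of Corollary \ref{CANNormSolv} to force $A$ to be normally solvable and $\dim\coker A<m<\infty$, which gives $A\in\Phi_-$. Alternatively, one may observe that $s_m^l(A)>0$ is equivalent (up to the factor $2^m-1$ in Proposition \ref{PANBound}) to $M_m(A)>0$, which by \eqref{EDualBM} is the same as $B_m(A^*)>0$, hence $s_m^r(A^*)>0$; applying claim \textit{1.} to $A^*$ yields $A^*\in\Phi_+$, and then $A\in\Phi_-$ by the duality relation between semi-Fredholm classes recalled at the beginning of Section \ref{SAppN}.

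There is no real obstacle here: the corollary is essentially a repackaging of Corollary \ref{CANNormSolv}, the only point worth being careful about is the case distinction --- one must separately dispose of the non-normally-solvable case \emph{before} concluding finiteness of the kernel (or cokernel) from the remaining cases, since the formulas about $\dim\ker A$ and $\dim\coker A$ in Corollary \ref{CANNormSolv} are stated only under the normal solvability assumption.
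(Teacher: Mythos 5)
Your argument is correct and is exactly the intended one: the paper states this corollary without proof precisely because it is the immediate contrapositive of parts \textit{3.}, \textit{4.} and \textit{5.} of Corollary \ref{CANNormSolv}, which is what you carry out. Your care in disposing of the non-normally-solvable case first is appropriate, and the alternative duality route for claim \textit{2.} is also valid.
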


\section{Some auxiliary results in the \texorpdfstring{$\Pc$}{P}-framework}\label{SPTheory}
During the last decades it turned out to be extremely fruitful to embed the class
of band-dominated operators in another Banach algebraic framework which provides
appropriate adapted notions of compactness, Fredholmness and convergence. We refer to 
\cite{LangeR, NonStrongly, RochFredTh, LimOps, Marko, LiChW, SeSi2, SeSi3} and the 
recent survey article \cite{SeFre} for the details and further references. Here and in 
Section \ref{SOpSpec} we just cite the basic definitions and borrow some of the 
tools that we are going exploit later on:

\begin{defn}
Introduce the collection $\Pc=(P_n)$ of the canonical projections 
$P_n:=\chi_{\{-n,\ldots,n\}^N}I$ on $\Xb=l^p(\Zb^N,X)$,\footnote{Here $\chi_U$ denotes
the characteristic function of the set $U\subset\Zb^N$.} and denote the respective
complementary projections by $Q_n:=I-P_n$.

A bounded linear operator $K$ is said to be $\Pc$-compact if
$\|KQ_n\|+\|Q_nK\|$ tends to zero as $n$ goes to infinity (that is, the action of 
$K$ is mainly concentrated near the origin of $\Zb^N$). 
The set of all $\Pc$-compact operators is denoted by $\pc{\Xb}$.

By $\pb{\Xb}$ denote the family 
$\pb{\Xb}:=\{A\in\lb{\Xb}: K\in\pc{\Xb}\Rightarrow AK,KA\in\pc{\Xb}\}$.
\end{defn}

Roughly speaking, one may say that $\pb{\Xb}$ collects the operators which are 
compatible with the new family of $\Pc$-compact ones since it indeed turns out that 
the set $\pb{\Xb}$ is a unital Banach algebra and contains $\pc{\Xb}$ as a closed 
two-sided ideal. Moreover, a bounded linear operator $A$ belongs to $\pb{\Xb}$ if 
and only if (see \cite[Section 1.1]{LimOps})
\begin{equation}\label{ELXP}
\|P_kAQ_n\| + \|Q_nAP_k\|\to 0\quad\text{as}\quad n\to\infty \quad\text{for every $k\in\Nb$}.
\end{equation}
This, together with the fact that 
$\|P_nKP_n-K\|\leq\|P_nK-K\|\|P_n\|+\|KP_n-K\|\to_n 0$ for every $K\in\pc{\Xb}$, 
easily show that in particular $\pc{\Xb}\subset\BDO(\Xb)\subset\pb{\Xb}$ holds.

\begin{defn}
An operator $A\in\pb{\Xb}$ is said to be $\Pc$-Fredholm if there exists a bounded
linear operator $B$ (a so-called $\Pc$-regularizer for $A$) such that $I-AB$ and 
$I-BA$ are $\Pc$-compact.
\end{defn}
One of the crucial and deep results (\cite[Theorem 1.16]{SeSi3}) of that theory 
states that such $\Pc$-regularizers $B$ automatically belong to $\pb{\Xb}$, hence 
$\Pc$-Fredholmness of $A$ 
is equivalent to invertibility of the respective coset $A+\pc{\Xb}$ in the quotient 
algebra $\pb{\Xb}/\pc{\Xb}$. We will address the role and importance of this 
alternative notion of Fredholmness in Section \ref{SOpSpec}. The following 
results reveal that we can even capture the (classical) Fredholm and semi-Fredholm 
properties of operators $A\in\pb{\Xb}$ in terms of $\Pc$-compact projections:
\begin{prop}(\cite[Proposition 1.27]{SeSi3} or \cite[Theorem 11]{SeFre})
Let $A\in\pb{\Xb}$. Then 
\begin{itemize}
\item $A$ is Fredholm iff there are projections $P,P'\in\pc{\Xb}$ of finite rank with
			$\im P=\ker A$ and $\ker P'=\im A$.
\item $A$ is not Fredholm iff for every $\gamma>0$ and every $k\in\Nb$ there is 
			a projection $Q\in\pc{\Xb}$ with $\rk Q\geq k$ such that 
			$\min\{\|AQ\|,\|QA\|\}\leq\gamma$. ($A$ is called $\Pc$-deficient in this case.)
\end{itemize}
\end{prop}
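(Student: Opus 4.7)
The plan is to derive both equivalences from the approximation-number machinery of Section \ref{SAppN}, upgraded with the $\pb{\Xb}$-framework so that abstract finite-dimensional subspaces can be replaced by ones carried by $\Pc$-compact projections.

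For the \textbf{first bullet}, the ``if'' direction is immediate: a finite-rank projection $P \in \pc{\Xb}$ with $\im P = \ker A$ forces $\dim \ker A < \infty$; a finite-rank $P' \in \pc{\Xb}$ with $\ker P' = \im A$ makes $\im A$ closed and $\coker A \cong \im P'$ finite-dimensional, so $A \in \Phi$. The ``only if'' direction relies on the fact that for Fredholm $A \in \pb{\Xb}$ both $\ker A$ and a chosen finite-dimensional complement of $\im A$ consist of ``$\Pc$-regular'' vectors, in the sense that they admit biorthogonal systems $(e_i, \phi_i)$ with $\|Q_n e_i\|\to 0$ and with the corresponding dual decay on $\phi_i$; the associated finite-rank projection $P = \sum_i \phi_i(\cdot) e_i$ then automatically satisfies $\|PQ_n\|, \|Q_nP\| \to 0$. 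The required regularity of $\ker A$ is extracted from the identity $0 = Ax = AP_n x + AQ_n x$, together with $AP_n \in \pc{\Xb}$ (since $P_n \in \pc{\Xb}$ and $A \in \pb{\Xb}$) and the bounded-below behaviour of $A$ on a topological complement of $\ker A$; the dual side is treated analogously for $A^*$ via (\ref{EDualMod})--(\ref{EDualBM}).

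For the \textbf{second bullet}, the ``if'' direction is a clean consequence of Proposition \ref{PANBound} and Corollary \ref{CANNormSolv}. Given $Q \in \pc{\Xb}$ of rank $\geq k$ with $\|AQ\| \leq \gamma$, set $F := A(I-Q)$: then $\ker F \supset \im Q$ and $\|A-F\| = \|AQ\| \leq \gamma$, so $s_k^r(A) \leq \gamma$; symmetrically, $\|QA\| \leq \gamma$ gives $s_k^l(A) \leq \gamma$ via $F := (I-Q)A$. Letting $\gamma \to 0$ for each fixed $k$, a pigeonhole argument together with the monotonicity of $(s_m^r(A))$ and $(s_m^l(A))$ forces one of these sequences to vanish identically, whence $A \notin \Phi$ by Corollary \ref{CANNormSolv}.

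The converse --- not Fredholm $\Rightarrow$ $\Pc$-deficient --- is the substantive direction and is where I expect the main obstacle. By the duality (\ref{EDualMod})--(\ref{EDualBM}) it suffices to treat $A \notin \Phi_+$: Corollary \ref{CANNormSolv} then gives $s_m^r(A) = 0$ and Proposition \ref{PANBound} yields $B_m(A) = 0$ for every $m$. Hence, for prescribed $k$ and $\delta > 0$, there is a $k$-dimensional subspace of $\Xb$ on which $\|A\cdot\| \leq \delta$. The delicate step is to transfer this to a $k$-dimensional subspace sitting inside $\im P_n$ for sufficiently large $n$, while keeping $\|A\cdot\|$ small: this uses $A \in \pb{\Xb}$ to control the action of $A$ on the $P_n$-truncations of carefully chosen approximate-null vectors, so that their truncates remain near-linearly-independent with small image under $A$. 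Taking $Q$ to be a finite-rank projection onto such a truncated family, built from an Auerbach biorthogonal system so that $\|Q\|$ stays uniformly bounded in $k$, yields $Q \in \pc{\Xb}$ (its image and functionals are supported near the origin) with $\rk Q \geq k$ and $\|AQ\| \leq \gamma$, as required. The uniform control on $\|Q\|$ as $k$ grows --- precisely what the Auerbach construction affords --- is the technical crux of the argument.
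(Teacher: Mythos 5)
The paper does not actually prove this proposition --- it is imported verbatim from \cite[Proposition 1.27]{SeSi3} and \cite[Theorem 11]{SeFre} --- so there is no in-paper argument to measure you against; I am assessing your proposal on its own terms. The easy halves are fine: the ``if'' of the first bullet is immediate as you say, and your ``if'' of the second bullet is clean and correct (the choices $F=A(I-Q)$ and $F=(I-Q)A$ give $s_k^r(A)\le\|AQ\|$ and $s_k^l(A)\le\|QA\|$, and the pigeonhole-plus-monotonicity step combined with Corollary \ref{CANNormSolv} does force $A\notin\Phi$). The two ``only if'' directions, however, contain genuine gaps.

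First bullet, ``only if'': the whole content is that the finite-rank projections can be chosen in $\pc{\Xb}$, i.e.\ that one can find a biorthogonal system with $\|Q_ne_i\|\to0$ \emph{and} $\|\phi_i\circ Q_n\|\to0$. The identity $0=AP_nx+AQ_nx$ only says $AQ_nx=-AP_nx$; to conclude decay of $Q_nx$ from it you would need $A$ to be bounded below on $\im Q_n$ for large $n$, which is essentially invertibility at infinity --- the very thing this proposition is used to establish. For $p=\infty$ the kernel decay is exactly the content of Proposition \ref{PPFred}, and for $p=1$ the dual decay is equally non-trivial (the $\phi_i$ live in $l^\infty=(l^1)^*$, where $\|\phi_i Q_n\|\to0$ is a genuine restriction). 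None of this is actually derived in your sketch.

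Second bullet, ``only if'': your identified ``technical crux'' is misplaced. Auerbach only gives $\|Q\|\le k$, not a $k$-uniform bound (uniformly bounded projections onto arbitrary $k$-dimensional subspaces do not exist in general Banach spaces), and no uniformity is needed anyway: for fixed $k$ you may choose $\delta$ after $k$ and $\gamma$, so any $k$-dependent constant is absorbed. The real difficulty is different: a finite-rank projection is \emph{not} automatically $\Pc$-compact (consider rank-one projections on $l^\infty$), so one must produce $k$ normalized approximate kernel vectors with \emph{finite}, preferably pairwise disjoint, supports --- finiteness of the supports is what puts $Q$ into $\pc{\Xb}$, and disjointness is what bounds $\|Q\|$ by $1$ in the $l^p$-norms. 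Extracting such vectors from $B_k(A)=0$ requires \eqref{ELXP} to cut off and an induction pushing successive supports apart; your ``truncates remain near-linearly-independent with small image'' is precisely where that argument has to live, and it is not carried out. Finally, the opening reduction ``by duality it suffices to treat $A\notin\Phi_+$'' breaks down at $p=\infty$, where passing to adjoints leaves the $l^p$-scale; the paper itself is careful to route that case through $l^0$ via Proposition \ref{PPFred}.
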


\begin{cor}\label{CPDeficient}
Let $A\in\pb{\Xb}$.
If $A\in\Phi_-\setminus \Phi$ (or $A\in\Phi_+\setminus \Phi$) then, for every 
$\gamma>0$ and every $k\in\Nb$, there is a projection $R\in\pc{\Xb}$ with $\rk R\geq k$ 
such that $\|AR\|\leq\gamma$ ($\|RA\|\leq\gamma$, respectively).
\end{cor}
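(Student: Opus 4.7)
The plan is to combine the preceding proposition with Corollary~\ref{CANNormSolv}. Since $A$ is not Fredholm in either case, the proposition produces, for any $\gamma>0$ and $k\in\Nb$, a projection $R\in\pc{\Xb}$ with $\rk R\geq k$ and $\min\{\|AR\|,\|RA\|\}\leq\gamma$. The semi-Fredholm hypothesis then lets us rule out the ``wrong'' alternative in that minimum by producing a strictly positive lower bound from the lower approximation numbers.

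Concretely, suppose $A\in\Phi_-\setminus\Phi$. Then $\dim\coker A<\infty$, so by Corollary~\ref{CANNormSolv} the constant $c:=s_{m_0}^l(A)$ is strictly positive for $m_0:=\dim\coker A+1$. Given $\gamma>0$ and $k$, set $\gamma':=\min(\gamma,c/2)$ and $k':=\max(k,m_0)$, and apply the preceding proposition to obtain $R\in\pc{\Xb}$ with $\rk R\geq k'$ and $\min\{\|AR\|,\|RA\|\}\leq\gamma'$. I claim the alternative $\|RA\|\leq\gamma'$ is impossible: setting $F:=(I-R)A$ gives $\|A-F\|=\|RA\|\leq\gamma'$, while $\im F\subseteq\ker R$ yields a natural surjection $\Xb/\im F\twoheadrightarrow \Xb/\ker R$, so
\[
    \dim\coker F\;\geq\;\dim\Xb/\ker R\;=\;\rk R\;\geq\;m_0\;>\;\dim\coker A.
\]
The defining infimum of $s_{m_0}^l(A)$ then forces $c\leq\|A-F\|\leq\gamma'\leq c/2$, a contradiction. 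Hence $\|AR\|\leq\gamma'\leq\gamma$, as required.

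The case $A\in\Phi_+\setminus\Phi$ is entirely symmetric: set $m_0:=\dim\ker A+1$ and use the positivity of $s_{m_0}^r(A)$ from Corollary~\ref{CANNormSolv}; if the proposition's minimum were achieved by $\|AR\|\leq\gamma'$, then $F:=A(I-R)$ would satisfy $\|A-F\|=\|AR\|\leq\gamma'$ and $\ker F\supseteq\im R$, so $\dim\ker F\geq\rk R\geq m_0$, again contradicting the infimum. Thus $\|RA\|\leq\gamma$.

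The only subtle point is the ordering of quantifiers: one must first extract the positive constant $c$ from the semi-Fredholm side (via Corollary~\ref{CANNormSolv}), then shrink the tolerance below $c$ and raise the rank threshold above $m_0$, before feeding the shrunken $\gamma'$ and enlarged $k'$ into the preceding proposition. Once those bookkeeping choices are made, the contradiction is automatic and no further input from the $\Pc$-theory is required.
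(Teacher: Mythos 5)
Your proof is correct and follows essentially the same route as the paper: use Corollary~\ref{CANNormSolv} to get a positive lower approximation number, observe that this bounds $\|RA\|$ (resp.\ $\|AR\|$) from below for every projection $R$ of sufficiently large rank via $F:=(I-R)A$ (resp.\ $F:=A(I-R)$), and thereby force the minimum in the preceding proposition onto the other factor. The only difference is that you spell out the inequality $\|RA\|\geq s_{m_0}^l(A)$ and the quantifier bookkeeping explicitly, which the paper leaves implicit.
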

\begin{proof}
If $A\in\Phi_-\setminus \Phi$ and $k>\dim\coker A$ then $s_k^l(A)>0$ by the 2nd 
part of Corollary \ref{CANNormSolv}. Hence
\[C:=\inf\{\|RA\|: R\text{ is a projection with }\rk R \geq k\}\geq s_k^l(A)>0.\]
By the previous proposition for every $0<\gamma<C$ there must be a $\Pc$-compact 
projection $R$ with $\rk R\geq k$ such that $\|AR\|\leq\gamma$, since 
$\|RA\|\geq C$. The case $A\in\Phi_+\setminus \Phi$ is analogous.
\end{proof}

It has already been mentioned and exploited how the (semi-)Fredholm properties translate
under passing to dual spaces and adjoint operators. Concerning the $\Pc$-notions on
$\Xb=l^p(\Zb^N,X)$, $p<\infty$, we obviously have $\Pc^*=\Pc$ on $\Xb^*=l^q(\Zb^N,X^*)$ 
where as usual $1/p+1/q=1$ if $p>1$, $q=\infty$ if $p=1$, and $q=1$ if $p=0$. From
the definitions we immediately deduce that if $A$ is $\Pc$-compact, in $\pb{\Xb}$,
or $\Pc$-Fredholm, then $A^*$ is of the same type, respectively. Thus the above
results are still useful for $A^*$.
Finally, we point out that the case $p=\infty$ is somewhat more exotic 
than the cases $p<\infty$, since there passing to the dual space and to adjoint 
operators exceeds the scale of $l^p$-spaces. 
However, for operators in $\pb{l^\infty}$ things are still under control by the 
following

\begin{prop}\label{PPFred}(cf. \cite[Corollary 1.19]{SeSi3})
Let $A\in\pb{l^\infty}$. Then $A$ maps the subspace $l^0$ to $l^0$, hence 
$A|_{l^0}\in\pb{l^0}$. Moreover, $A$ is Fredholm if and only if $A|_{l^0}$ is 
Fredholm. In this case
\[\dim\ker A = \dim\ker A|_{l^0},\quad\text{and}\quad 
\dim\coker A = \dim\coker A|_{l^0}.\]
\end{prop}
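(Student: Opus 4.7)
I would handle the two implications separately, both relying on the observation (immediate from $\|Q_n K\|\to 0$) that every $K\in\pc{l^\infty}$ automatically takes values in $l^0$, since this condition says precisely that $Kx$ has uniform tail decay over the unit ball. The preliminary step is to show $A(l^0)\subset l^0$: given $x\in l^0$ and $\varepsilon>0$, pick $k$ with $\|Q_k x\|_\infty<\varepsilon$ and, by (\ref{ELXP}), an $n_0$ with $\|Q_n AP_k\|<\varepsilon$ for $n\ge n_0$; the decomposition $Q_n Ax=(Q_n AP_k)x+(Q_n A)(Q_k x)$ yields $\|Q_n Ax\|_\infty\le\varepsilon\|x\|_\infty+\|A\|\varepsilon$, so $Ax\in l^0$. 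That $A|_{l^0}\in\pb{l^0}$ is then immediate from (\ref{ELXP}) read on $l^0$.

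For the forward direction (``$A$ Fredholm $\Rightarrow A|_{l^0}$ Fredholm with equal dimensions''), the previous proposition furnishes finite-rank $P,P'\in\pc{l^\infty}$ with $\im P=\ker A$ and $\ker P'=\im A$. The initial observation gives $\im P\subset l^0$, so $\ker A\subset l^0$ and $\ker A=\ker A|_{l^0}$. Now let $B$ be a $\Pc$-regularizer for $A$ (automatically in $\pb{l^\infty}$ by \cite[Theorem 1.16]{SeSi3}), so $BA=I-T$ with $T\in\pc{l^\infty}$. Any $y\in l^0$ with $P'y=0$ equals $Ax$ for some $x\in l^\infty$; then $x=By+Tx$ lies in $l^0$, since $By\in l^0$ by the preliminary step applied to $B$ and $Tx\in l^0$ by the initial observation. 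Therefore $\im A|_{l^0}=\ker(P'|_{l^0})$ is closed in $l^0$ of codimension $\rk P'=\dim\coker A$, completing this direction.

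The converse (``$A|_{l^0}$ Fredholm $\Rightarrow A$ Fredholm'') is the main obstacle. My plan is to take a $\Pc$-regularizer $B_0\in\pb{l^0}$ for $A|_{l^0}$ and lift it to some $B\in\pb{l^\infty}$ using the matrix representation that (\ref{ELXP}) canonically attaches to $\pb{\cdot}$-operators through the entries $P_i B_0 P_j$; the uniform decay imposed by $B_0\in\pb{l^0}$ via (\ref{ELXP}) should force the same matrix to define a bounded operator on $l^\infty$ lying in $\pb{l^\infty}$, with the regularizer identities modulo $\pc{l^\infty}$ verified entrywise. This would yield $\Pc$-Fredholmness of $A$ on $l^\infty$, and rerunning the forward-direction argument with the projections now supplied by $\Pc$-Fredholmness promotes it to ordinary Fredholmness and delivers the dimension equalities. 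The delicate step is precisely this $l^0\to l^\infty$ extension of $B_0$: $l^0$-boundedness does not imply $l^\infty$-boundedness in general, so one must genuinely exploit the $\pb{\cdot}$-structure (not mere boundedness of $B_0$) to carry it out.
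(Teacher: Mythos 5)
The paper itself offers no proof of this proposition; it is imported wholesale from \cite[Corollary 1.19]{SeSi3}, so there is nothing internal to compare your argument against and it must be judged on its own. Your preliminary step ($A(l^0)\subset l^0$ via \eqref{ELXP} and the splitting $Q_nAx=Q_nAP_kx+Q_nAQ_kx$) and your forward direction are correct and complete: the finite-rank projections $P,P'\in\pc{l^\infty}$ from the Fredholm characterization, the inclusion $\im K\subset l^0$ for $K\in\pc{l^\infty}$, and the identity $x=By+Tx$ coming from a $\Pc$-regularizer (which exists because Fredholmness implies $\Pc$-Fredholmness in general, as the paper notes via \cite[Corollary 12]{SeFre}) together give $\ker A=\ker A|_{l^0}$ and $\im A|_{l^0}=\ker(P'|_{l^0})$ with the right codimension.

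The converse, which you yourself flag as the main obstacle, is where the proposal stops being a proof. Two things are missing. First, the extension of $B_0\in\pb{l^0}$ to $l^\infty$ is only announced: the honest construction is to define $Bx$ by $P_kBx:=\lim_m P_kB_0P_mx$ (the limit exists because $\|P_kB_0Q_m\|\to 0$), check $\|B\|\le\|B_0\|$, that $B\in\pb{l^\infty}$, and that $BA-(B_0A|_{l^0})^{\mathrm{ext}}$ and $AB-(A|_{l^0}B_0)^{\mathrm{ext}}$ vanish --- each of these needs \eqref{ELXP} for both $A$ and $B_0$, and none is written down. Second, and more seriously, your closing sentence does not close the argument: $\Pc$-Fredholmness of $A$ on $l^\infty$ does \emph{not} ``supply the projections'' of the Fredholm characterization --- that characterization is an equivalence with ordinary Fredholmness, which is exactly what you are trying to establish, and for $\dim X=\infty$ the implication $\Pc$-Fredholm $\Rightarrow$ Fredholm is false in general (cf.\ Example \ref{E2}). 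What actually works is to take the finite-rank projections $p,p'\in\pc{l^0}$ attached to the Fredholm operator $A|_{l^0}$, extend $p'$ to $l^\infty$, and show $\ker p'^{\mathrm{ext}}=\im A$: the inclusion $\supset$ uses $\|p'Q_j\|\to 0$ and $\|P_jAQ_m\|\to 0$, and the inclusion $\subset$ uses the extended regularizer to write $y=A(By)+Sy$ with $Sy\in l^0\cap\ker p'=\im A|_{l^0}$. Without this (or an equivalent substitute) the backward implication and the cokernel equality remain unproved.
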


\section{On semi-Fredholm band-dominated operators}\label{SSemiF}
This section is devoted to our main question whether or when the semi-Fredholm 
property of a band-dominated operator already implies its Fredholmness.
We start with counterexamples:
\begin{ex}\label{E1}
The operator $A:l^2(\Zb^2,\Cb)\to l^2(\Zb^2,\Cb)$, 
$A:=\chi_{\Zb\times\Zb_-}I + \chi_{\Zb\times\Nb}V_{(0,1)}$
is semi-Fredholm, even one-sided invertible, but not Fredholm.
\end{ex}
\begin{ex}\label{E2}
Let $X$ be a Banach space and $\dim X=\infty$. Then 
$A:l^2(\Zb,X)\to l^2(\Zb,X)$, $A:=\chi_{\Zb_-}I + \chi_{\Nb}V_1$
is semi-Fredholm, one-sided invertible, but not Fredholm. The same observation
holds for the operator $B:=\diag\{\ldots,\Psi,\Psi,\Psi,\ldots\}$ with
$\Psi\in\lb{X}$ being a non-invertible isometry. 
\end{ex}
Thus, the claim is not true on $l^p(\Zb^N,X)$-spaces with $N>1$ or
$\dim X=\infty$. Actually these examples even demonstrate that one gets the same
outcome for the modified question with ``Fredholm'' being replaced by
``$\Pc$-Fredholm''. So, we are left with the following.

\begin{thm}\label{TSemi}
Let $\Xb:=l^p(\Zb,X)$, $p\in\{0\}\cup[1,\infty]$, $\dim X<\infty$. If $A$ is 
a semi-Fredholm band-dominated operator on $\Xb$ then $A$ is Fredholm.
\end{thm}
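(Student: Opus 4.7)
The plan is to argue by contradiction, combining Corollary~\ref{CPDeficient}, the commutator calculus for band-dominated operators, and the richness of $\BDO(\Xb)$ under $\dim X<\infty$, $N=1$.

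By Proposition~\ref{PPFred} the case $p=\infty$ reduces to $p=0$, and by taking adjoints (which preserves band-dominatedness for $p\in[1,\infty)\cup\{0\}$ and swaps $\Phi_+\leftrightarrow\Phi_-$) it suffices to assume $A\in\Phi_-\setminus\Phi$ on $\Xb=l^p(\Zb,X)$ for some $p\in[1,\infty)$. Corollary~\ref{CPDeficient} then furnishes $\Pc$-compact projections $R_k$ with $\rk R_k\geq k$ and $\|AR_k\|\leq 1/k$. Truncating $R_k$ (up to an error vanishing as $k\to\infty$) to $P_{n_k}R_kP_{n_k}$ forces $\im R_k\subset P_{n_k}\Xb$, a space of dimension $(2n_k+1)d$ with $d=\dim X<\infty$; refining the construction to keep $n_k$ as small as the $\Pc$-compact decay allows produces $n_k\sim k/(2d)$, in which case a dimension count in $P_{n_k}\Xb$ yields unit vectors $x_k\in\im R_k\cap\ker P_{n_k-1}$ whose support is essentially confined to the two extremal points $\{\pm n_k\}$, with $\|Ax_k\|\leq 1/k$. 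The hypothesis $N=1$ now enters: pigeonholing on a subsequence I assume $x_k\approx v_k\otimes e_{n_k}$ with $v_k\in X$, $\|v_k\|\geq c>0$.

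Translating back, $\tilde x_k:=V_{-n_k}x_k\approx v_k\otimes e_0$, and finite-dimensionality of $X$ gives a subsequence with $v_k\to v\neq 0$, hence $\tilde x_k\to y:=v\otimes e_0\in\Xb$. Richness of $\BDO$ (a consequence of $\dim X<\infty$) furnishes a further subsequence along which $V_{-n_k}AV_{n_k}$ converges $\Pc$-strongly to a limit operator $A_h$. For the single-point vectors $\tilde x_k$, $\Pc$-strong convergence yields $V_{-n_k}AV_{n_k}\tilde x_k\to A_hy$ in norm; but $\|V_{-n_k}AV_{n_k}\tilde x_k\|=\|Ax_k\|\to 0$, so $A_hy=0$ and $\ker A_h\neq\{0\}$.

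The contradiction is then extracted using the limit-operator characterization of $\Phi_-$ from \cite{LimOps,LiChW,LiSe}: $A\in\Phi_-$ forces every element of $\sigma_{op}(A)$ to be surjective with a uniform lower bound on $q(\cdot)$, while the Fredholm criterion in the $N=1$, $\dim X<\infty$ regime states that $A$ is Fredholm iff every $A_h$ is invertible. Richness gives $\sigma_{op}(A_h)\subset\sigma_{op}(A)$, so all limit operators of $A_h$ are surjective as well; combining this with the just-constructed $\ker A_h\neq\{0\}$ should, after a dimension/index argument tracking $\sigma_{op}(A)$ as a whole, contradict $A\in\Phi_-$. The main obstacle is precisely this final step---turning ``some $A_h$ is surjective with non-trivial kernel'' into a genuine contradiction without implicit circular appeal to the theorem itself; this requires either a careful iteration within $\sigma_{op}(A)$ leveraging the equality $\sigma_{op}(A_h)=\sigma_{op}(A)$ for rich operators, or an index-stability argument along the translated family $V_{-m}AV_m$ to transfer the infinite-dimensional kernel defect of $A$ into $A_h$.
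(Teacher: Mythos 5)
Your opening moves (reduce to $A\in\Phi_-\setminus\Phi$, invoke Corollary~\ref{CPDeficient} to get high-rank $\Pc$-compact projections $R_k$ with $\|AR_k\|$ small, truncate to $P_{n_k}R_kP_{n_k}$) match the paper, but the argument then takes a route that does not close. First, the step producing unit vectors $x_k\in\im R_k$ supported ``at the extremal points $\{\pm n_k\}$'' needs $\rk R_k$ to essentially saturate $\dim\im P_{n_k}=(2n_k+1)d$; Corollary~\ref{CPDeficient} gives no control whatsoever on how large an $n_k$ is needed before $P_{n_k}R_k$ captures the rank of $R_k$, and there is no ``refinement'' of that corollary that forces $n_k\sim k/(2d)$. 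Second, and fatally, the final step is missing by your own admission, and the tools you propose to fill it are circular: the surjectivity characterization of Fredholmness via limit operators is Theorem~\ref{TOneSided}\textit{(vi)}, which the paper \emph{deduces from} Theorem~\ref{TSemi}; likewise there is no available result asserting that $A\in\Phi_-$ forces all limit operators to be surjective (Theorem~\ref{TLimOps} only covers $\Pc$-Fredholm $A$), nor an index-stability argument for semi-Fredholm band-dominated operators that you could invoke here. Producing one limit operator $A_h$ with $\ker A_h\neq\{0\}$ is simply not, by itself, in contradiction with $A\in\Phi_-\setminus\Phi$ on the basis of anything proved earlier in the paper. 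A further minor issue: Proposition~\ref{PPFred} transfers \emph{Fredholmness} between $A$ and $A|_{l^0}$, not the property $\Phi_-$, so your reduction of $p=\infty$ to $p=0$ is not justified for the $\Phi_-$ case (the paper's $\Phi_-$ argument works directly for all $p$ including $\infty$, and only the $\Phi_+$ case is routed through $l^0$).

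For contrast, the paper never passes to limit operators at all. With $m:=\dim\coker A+1$ and $\epsilon:=s_m^l(A)>0$, it compresses $A$ to the finite-dimensional operator $B:=P_{n-l}AP_n:\im P_n\to\im P_{n-l}$, where $l$ is chosen so that $\|P_{n-l}AQ_n\|\leq\epsilon/5$. The projection from Corollary~\ref{CPDeficient} with $k=2ld+m$ yields a $k$-dimensional subspace on which $B$ is small, hence $s_k^r(B)$ is small; the elementary identity for linear maps between finite-dimensional spaces, $\dim\coker F=\dim\ker F-(\dim\im P_n-\dim\im P_{n-l})$, converts this into $s_m^l(B)$ being small, because $\dim\im P_n-\dim\im P_{n-l}=2ld$ is bounded independently of $n$ --- this is exactly where $N=1$ and $\dim X<\infty$ enter. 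Transferring $s_m^l(B)$ back to $s_m^l(A)$ up to the off-diagonal error $\epsilon/5$ gives $s_m^l(A)<\epsilon$, the desired contradiction. If you want to salvage your approach, the lesson is that the kernel--cokernel bookkeeping has to be done at the level of finite truncations, where it is exact linear algebra, rather than at the level of limit operators, where the needed implications are precisely the theorems that depend on this one.
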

\begin{proof}
As a start, we consider $A\in\Phi_-\setminus\Phi$ and set $m:=\dim\coker A+1$. Then 
$\epsilon:= s_{m}^l(A)>0$ by Corollary \ref{CANNormSolv}. 
Since $A$ is band-dominated there exists an $l\in\Nb$ such that 
$\|P_{n-l} A Q_n\|\leq \epsilon/5$ for all $n>l$ (for band operators choose $l$ larger 
than the band width; and for band-dominated operators this is possible by a simple 
approximation argument). Set $d:=\dim X$.

By Corollary \ref{CPDeficient} with
the particular choice $k=2ld+m$ and $\gamma=\epsilon/(5(2^k-1))$ there is a 
$\Pc$-compact projection $R$ of rank $\tilde{k}\geq k$ such that $\|AR\|\leq\gamma$. Due 
to the $\Pc$-compactness of $R$ we find that for a sufficiently large $n\geq l+\tilde{k}$ 
also $P_nR$ has rank $\tilde{k}$ and $\|Q_nR\|\leq\gamma/\|A\|$. Thus $Y:=\im P_nR$ is a 
$\tilde{k}$-dimensional subspace of $\im P_n$ such that $\|A|_Y\|\leq 3\gamma$. 
To see the latter just notice that $\gamma\leq \|A\|/5$ and
\[\frac{\|AP_nRx\|}{\|P_nRx\|}\leq\frac{\|AP_nR\|\|Rx\|}{\|Rx\|-\|Q_nR\|\|Rx\|}\leq
\frac{\|AR\|+\|A\|\|Q_nR\|}{1-\|Q_nR\|}\leq
\frac{2\gamma}{1-1/5}\leq 3\gamma.\]
Now we consider the compressed operator $B:=P_{n-l}AP_n:\im P_n\to \im P_{n-l}$.
Then, clearly, $\|B|_Y\|\leq 3\gamma$. Thus, we get for its $k$th Bernstein number that
$B_k(B)\leq 3\gamma$, since every subspace $V$ of $\im P_n$ with 
$\dim ((\im P_n)/V) < k$ has a non-trivial intersection with $Y$. From Proposition 
\ref{PANBound} we conclude $s_k^r(B)\leq 3\gamma(2^k-1)$. Since 
$s_{\dim\im P_{n-l} - \dim\im P_n + k}^l(B)=s_k^r(B)$ 
by the definition and by Fredholm's alternative, and since
\[\dim\im P_{n-l} - \dim\im P_n + k = d(2(n-l)+1) -  d(2n+1) +  (2ld+m) = m,\]
we arrive at $s_{m}^l(B)\leq 3\gamma(2^k-1)=3\epsilon/5$.
This yields a contradiction:
\begin{align*}
s_{m}^l(A)
&=\inf\{\|A-G\|_{\lb{\Xb,\Xb}}:G\in\lb{\Xb,\Xb}, \dim\coker G\geq m\}\\
&\leq \inf\{\|A-Q_{n-l}A-P_{n-l}FP_n\|_{\lb{\Xb,\Xb}}:F\in\lb{\im P_n,\im P_{n-l}}, 
	\dim\coker F\geq m\}\\
&= \inf\{\|P_{n-l}A-P_{n-l}FP_n\|_{\lb{\Xb,\im P_{n-l}}}:F\in\lb{\im P_n,\im P_{n-l}}, 
	\dim\coker F\geq m\}\\
&\leq \inf\{\|P_{n-l}AP_n-P_{n-l}FP_n\|_{\lb{\Xb,\im P_{n-l}}}:F\in\lb{\im P_n,\im P_{n-l}}, 
	\dim\coker F\geq m\}\\
&\quad + \|P_{n-l}AQ_n\|\\
&\leq \inf\{\|B-F\|_{\lb{\im P_n,\im P_{n-l}}}:F\in\lb{\im P_n,\im P_{n-l}}, 
	\dim\coker F\geq m\} + \epsilon/5\\
&=s_{m}^l(B)+\epsilon/5<\epsilon=s_{m}^l(A).
\end{align*}
Thus, $\Phi\setminus\Phi_-$ does not contain any band-dominated operator.

Let $p\in\{0\}\cup [1,\infty)$ and $A\in\Phi_+\setminus\Phi$ be band-dominated. Then 
$A^*\in\BDO(\Xb^*)$ is in $\Phi_-\setminus\Phi$, contradicting the above, hence 
$\Phi_+\setminus\Phi$ does not contain any band-dominated operator in the cases
$p\neq\infty$. Finally, let $p=\infty$ and $A$ be $\Phi_+$. Then by Proposition 
\ref{PPFred} the restriction $A|_{l^0}$ maps into $l^0$, it is still band-dominated, 
it clearly has finite dimensional kernel and is normally solvable, hence it is 
$\Phi_+$ on $l^0$ as well. Thus, we again get that $A|_{l^0}$ is $\Phi$ by the above, 
Proposition \ref{PPFred} gives $A\in\Phi$, and the proof is finished.
\end{proof}

\begin{rem}
Note that in this proof we actually did not use that $A$ or $A^*$ are band-dominated, 
but only the property
\[\text{for every $\epsilon>0$ there is an $l\in\Nb$ such that
$\|P_{n-l} A Q_n\|, \|Q_n A P_{n-l}\|\leq \epsilon/5$ for every $n>l$}.\]
Thus, we could also consider the (more general) set of operators 
which are only subject to this condition, and we still have Theorem \ref{TSemi}. 
Such operators are called quasi-banded operators, were introduced in \cite{SeDis}
and studied in \cite[Section 4]{SeFre}, \cite{MaSaSe}. They form a Banach
subalgebra of $\pb{l^p(\Zb,X)}$ include all band-dominated operators, but also
contain e.g. Laurent and Toeplitz operators with quasi-continuous generating 
functions, as well as the flip operator $J:(x_i)\mapsto (x_{-i})$.
\end{rem}

\section{Band-dominated operators and the operator spectrum}\label{SOpSpec}
Here we will apply the above Theorem \ref{TSemi} on automatic Fredholmness in order
to discuss a remarkable improvement of one of the most important results on 
band-dominated operators, the characterization of their Fredholm property in terms 
of limit operators. We commence with the definition of $\Pc$-strong convergence and 
the operator spectrum, and we do that again in the broader framework $\pb{\Xb}$.

\subparagraph{The operator spectrum}
\begin{defn}
We say that a bounded sequence $(B_n)\subset\pb{\Xb}$ converges $\Pc$-strongly to an
operator $B\in\pb{\Xb}$ if, for every $m$, $\|P_m(B_n-B)\|+\|(B_n-B)P_m\|\to0$ as 
$n\to\infty$ and we shortly write $B=\plimn B_n$ in that case.

For an operator $A\in\pb{\Xb}$ and a sequence $h=(h_n)\subset \Zb^N$ of points which 
tend to infinity \footnote{I.e. $|h_n|\to\infty$ as $n\to\infty$}
$A_h:=\plimn V_{-h_n}AV_{h_n},$
if it exists, is called the limit operator of $A$ w.r.t. the sequence $h$.
The set
\[\sigma_{\op}(A):=\{A_h=\plimn V_{-h_n}AV_{h_n} \text{ where } 
h=(h_n)\subset\Zb^N\text{ tends to infinity}\}\]
of all limit operators of $A$ is called its operator spectrum.
We further say that $A\in\pb{\Xb}$ has a rich operator spectrum (or we simply call $A$ a rich 
operator) if every sequence $h\subset\Zb^N$ tending to infinity has a subsequence 
$g\subset h$ such that the limit operator  $A_g$ of $A$ w.r.t. $g$ exists. 
\end{defn}

Note that limit operators of limit operators of $A$ are again limit operators of $A$ 
(cf. \cite[Corollary 1.2.4]{LimOps} or \cite[Corollary 3.97]{Marko}).
The following results summarize that the operator spectrum is also stable under
passing to regularizers, adjoints and restrictions in a sense:

\begin{thm}\label{TLimOps}
(\cite[Theorem 16]{SeFre})
Let $A\in\pb{\Xb}$ be $\Pc$-Fredholm. 
Then all limit operators of $A$ are invertible and their inverses are uniformly 
bounded. Moreover, the operator spectrum of every $\Pc$-regularizer $B$ equals
\begin{equation*}
\sigma_{\op}(B) =(\sigma_{\op}(A))^{-1} := \{A_g^{-1}:A_g\in\sigma_{\op}(A)\}.
\end{equation*}
\end{thm}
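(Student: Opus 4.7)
The plan is as follows. Let $B \in \pb{\Xb}$ be a $\Pc$-regularizer of $A$ (automatically in $\pb{\Xb}$ by \cite[Theorem 1.16]{SeSi3}), so that $K_1 := AB - I$ and $K_2 := BA - I$ lie in $\pc{\Xb}$, and fix a sequence $h = (h_n) \subset \Zb^N$ with $|h_n| \to \infty$ along which $A_h = \plimn V_{-h_n} A V_{h_n}$ exists. I would show simultaneously that $A_h$ is invertible with $\|A_h^{-1}\| \leq \|B\|$ and that $V_{-h_n} B V_{h_n} \to A_h^{-1}$ $\Pc$-strongly. The reverse inclusion $\sigma_{\op}(B) \subseteq (\sigma_{\op}(A))^{-1}$ then drops out by the symmetry of the hypotheses: $B$ is itself $\Pc$-Fredholm with $\Pc$-regularizer $A$, so the same reasoning applied to $B$ identifies each $B_g \in \sigma_{\op}(B)$ as the inverse of some $A_g \in \sigma_{\op}(A)$.

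The technical backbone is the observation that $V_{-h_n} K V_{h_n} \to 0$ $\Pc$-strongly for every $K \in \pc{\Xb}$. This is immediate: since $V_{h_n}$ is an isometry, $\|P_m V_{-h_n} K V_{h_n}\| = \|V_{h_n} P_m V_{-h_n}\, K\|$, and the shifted projection $V_{h_n} P_m V_{-h_n}$ is supported on the translated cube $\{-m,\ldots,m\}^N + h_n$, which lies entirely in $\im Q_r$ whenever $|h_n| > m + r$; hence this norm is bounded by $\|Q_r K\| \to 0$. The companion bound on $\|V_{-h_n} K V_{h_n} P_m\|$ is analogous. Applying the conjugation $V_{-h_n}(\cdot)V_{h_n}$ to $AB = I + K_1$ and $BA = I + K_2$ therefore yields
\[V_{-h_n} A V_{h_n}\cdot V_{-h_n} B V_{h_n} \to I \quad\text{and}\quad V_{-h_n} B V_{h_n}\cdot V_{-h_n} A V_{h_n} \to I\]
$\Pc$-strongly.

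To see $A_h$ is bounded below by $1/\|B\|$: for $x \in \im P_m$ the second identity reads $V_{-h_n} B V_{h_n}\cdot V_{-h_n} A V_{h_n} x = x + V_{-h_n} K_2 V_{h_n} x$; letting $n \to \infty$ and using $V_{-h_n} A V_{h_n} x \to A_h x$ and $V_{-h_n} K_2 V_{h_n} x \to 0$ in norm together with $\|V_{-h_n} B V_{h_n}\| \leq \|B\|$ gives $\|x\| \leq \|B\|\,\|A_h x\|$. Density of $\bigcup_m \im P_m$ in $\Xb$ (for $p < \infty$; for $p = \infty$ one restricts to $l^0$ via Proposition~\ref{PPFred}) extends this to the whole space, so $A_h$ is injective with closed range. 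The analogous argument for $A^*, B^* \in \pb{\Xb^*}$ (using that $\Pc$-strong convergence dualizes when $p < \infty$, so $A_h^* = \plimn V_{-h_n} A^* V_{h_n}$, and that $B^*$ is a $\Pc$-regularizer for $A^*$) shows $A_h^*$ is bounded below, whence $A_h$ is surjective by (\ref{EDualMod}). Thus $A_h$ is invertible with $\|A_h^{-1}\| \leq \|B\|$, uniformly in $h$.

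Finally, since $A_h^{-1}$ is itself a (trivial) $\Pc$-regularizer for $A_h \in \pb{\Xb}$, invoking \cite[Theorem 1.16]{SeSi3} once more gives $A_h^{-1} \in \pb{\Xb}$. Multiplying $V_{-h_n} A V_{h_n}\cdot V_{-h_n} B V_{h_n} - I = V_{-h_n} K_1 V_{h_n}$ on the left by $A_h^{-1}$ and rearranging produces
\[V_{-h_n} B V_{h_n} - A_h^{-1} = A_h^{-1}\,V_{-h_n}K_1V_{h_n} \;-\; A_h^{-1}(V_{-h_n} A V_{h_n} - A_h)\,V_{-h_n} B V_{h_n};\]
each summand on the right is a product of uniformly bounded operators in $\pb{\Xb}$ in which one factor is $\Pc$-strongly null, so the product rule for $\Pc$-strong convergence forces the right-hand side to $0$ $\Pc$-strongly. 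I expect the main obstacle to be exactly this last step: verifying that $A_h^{-1} \in \pb{\Xb}$ without circularity --- this works only because $A_h^{-1}$ is already known to exist as a two-sided inverse, making it a trivial $\Pc$-regularizer to which \cite[Theorem 1.16]{SeSi3} applies. A secondary obstacle is $p = \infty$, where taking adjoints leaves the $l^p$ scale; this is resolved by the reduction to $l^0$ via Proposition~\ref{PPFred}.
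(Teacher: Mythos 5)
The paper does not prove this theorem at all -- it is quoted verbatim from \cite[Theorem 16]{SeFre} -- so there is no internal proof to compare against. Your architecture (conjugate the regularizer identities by $V_{h_n}$, observe $V_{-h_n}KV_{h_n}\to 0$ $\Pc$-strongly for $K\in\pc{\Xb}$, deduce a lower bound $j(A_h)\geq 1/\|B\|$ on finitely supported elements, dualize for surjectivity, then identify $V_{-h_n}BV_{h_n}\to A_h^{-1}$) is exactly the standard one, and most of the steps are sound, including the non-obvious use of \cite[Theorem 1.16]{SeSi3} to get $A_h^{-1}\in\pb{\Xb}$.

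However, the last step contains a genuine gap. The ``product rule'' you invoke -- a product of uniformly bounded operators in $\pb{\Xb}$ in which one factor is $\Pc$-strongly null is itself $\Pc$-strongly null -- is false. For instance $Q_n\to 0$ $\Pc$-strongly and $V_{2n}$ is a uniformly bounded sequence of band operators, yet $\|Q_nV_{2n}P_0\|=1$ for all $n$. The correct product rule requires \emph{all} factors to converge $\Pc$-strongly (to limits in $\pb{\Xb}$), and in your second summand $A_h^{-1}\,(V_{-h_n}AV_{h_n}-A_h)\,V_{-h_n}BV_{h_n}$ the rightmost factor is precisely the sequence whose convergence you are trying to establish -- so invoking any product rule here is circular. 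Concretely, estimating $\|(V_{-h_n}AV_{h_n}-A_h)V_{-h_n}BV_{h_n}P_m\|$ forces you to control $\|Q_kV_{-h_n}BV_{h_n}P_m\|=\|\chi_{\Zb^N\setminus(C_k+h_n)}B\,\chi_{C_m+h_n}\|$ uniformly in $n$, and condition \eqref{ELXP} gives no such uniformity over translates for a general $B\in\pb{\Xb}$. The fix is standard: your factorization does work for the $\|P_m(\cdot)\|$ half (because $\|P_mA_h^{-1}Q_k\|\to 0$), but for the $\|(\cdot)P_m\|$ half you must use the mirror identity $V_{-h_n}BV_{h_n}-A_h^{-1}=V_{-h_n}BV_{h_n}(A_h-V_{-h_n}AV_{h_n})A_h^{-1}+V_{-h_n}K_2V_{h_n}A_h^{-1}$, where $\|Q_kA_h^{-1}P_m\|\to 0$ saves the day. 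A secondary, smaller issue: your duality step for surjectivity breaks not only at $p=\infty$ but already at $p=1$, since $\Xb^*=l^\infty$ there and $\bigcup_m\im P_m$ is not dense in it; both cases need the detour through $l^0$ (restriction commutes with regularizers and limit operators, and invertibility lifts back via Proposition \ref{PPFred}), which you only gesture at.
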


\begin{prop}\label{POpSFormulas}
Let $A\in\pb{\Xb}$ be rich. If $p<\infty$ then $\sigma_{\op}(A^*)=(\sigma_{\op}(A))^*$.
If $p=\infty$ then $\sigma_{\op}(A|_{l^0})=(\sigma_{\op}(A))|_{l^0}$.
\end{prop}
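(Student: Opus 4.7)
The plan is to prove each claimed equality by establishing two inclusions. In both cases, the forward inclusion (``taking adjoints/restrictions commutes with forming limit operators'') follows from the compatibility of $\Pc$-strong convergence with the operation, and the reverse inclusion uses the richness of $A$ through a subsequence argument.

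For the case $p<\infty$, I would start from the identities $V_k^{\ast}=V_{-k}$, $P_m^{\ast}=P_m$ and the fact that $\|T\|=\|T^{\ast}\|$ for bounded operators between Banach spaces. A short computation then gives $(V_{-h_n}AV_{h_n})^{\ast}=V_{-h_n}A^{\ast}V_{h_n}$, together with
\[
\|P_m(V_{-h_n}AV_{h_n}-A_h)\|=\|(V_{-h_n}A^{\ast}V_{h_n}-A_h^{\ast})P_m\|
\]
and the analogous identity for the factor on the other side. Hence $V_{-h_n}AV_{h_n}\to A_h$ $\Pc$-strongly forces $V_{-h_n}A^{\ast}V_{h_n}\to A_h^{\ast}$ $\Pc$-strongly, which gives $A_h^{\ast}\in\sigma_{\op}(A^{\ast})$ and establishes the forward inclusion $(\sigma_{\op}(A))^{\ast}\subseteq\sigma_{\op}(A^{\ast})$. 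For the reverse inclusion, given any $B=\plimn V_{-h_n}A^{\ast}V_{h_n}\in\sigma_{\op}(A^{\ast})$, I would apply the richness of $A$ to the sequence $h$ to extract a subsequence $g\subset h$ along which $A_g$ exists. By the forward inclusion, $A_g^{\ast}=\plimn V_{-g_n}A^{\ast}V_{g_n}$, but the latter is a subsequence of a $\Pc$-strongly convergent sequence and therefore must share its limit $B$. So $B=A_g^{\ast}\in(\sigma_{\op}(A))^{\ast}$.

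For the case $p=\infty$, the identical strategy applies with ``restriction to $l^0$'' replacing ``taking adjoints''. Proposition \ref{PPFred} applied to $A$ and to each limit operator (which belongs to $\pb{l^\infty}$ because this class is closed under $\Pc$-strong limits) together with the obvious invariance of $l^0$ under the shifts ensures that all objects restrict cleanly to operators on $l^0$. The analogue of the norm identity is even easier, namely
\[
\|P_m T|_{l^0}\|_{l^0\to l^0}\leq\|P_m T\|\quad\text{and}\quad \|T|_{l^0} P_m\|_{l^0\to l^0}\leq\|T P_m\|
\]
for every $T\in\pb{l^\infty}$, which transfers $\Pc$-strong convergence on $l^\infty$ to $\Pc$-strong convergence on $l^0$. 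The same richness-and-subsequence argument then yields the equality.

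The main (modest) obstacle is verifying that $\Pc$-strong convergence transfers under the operation in question; once that is in hand, the richness trick is a standard manoeuvre in this framework and completes the proof. No genuinely new ideas beyond what is already established in the preceding sections are required.
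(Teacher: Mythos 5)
Your proposal is correct and follows essentially the same route as the paper: the forward inclusion via the compatibility of $\Pc$-strong convergence with adjoints (resp.\ restriction to $l^0$), and the reverse inclusion via richness, passing to a subsequence along which $A_g$ exists and noting that the subsequence must inherit the original limit. The paper's own proof is just a terser statement of exactly this argument, so there is nothing to add.
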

\begin{proof}
Let $p<\infty$ and $A\in\pb{\Xb}$ be rich. Then, by \eqref{ELXP}, $A^*\in\pb{\Xb^*}$ 
and, by the definition of $\Pc$-strong convergence, $A_g\in\sigma_{\op}(A)$ always 
yields $(A_g)^*=(A^*)_g\in\sigma_{\op}(A^*)$. 
Conversely, if $(A^*)_h\in\sigma_{\op}(A^*)$ then, due to the richness of $A$, there 
is a subsequence $g$ of $h$ such that $A_g$ exists, and then necessarily 
$(A_g)^*=(A^*)_g=(A^*)_h$. 
Similar arguments apply to $A|_{l^0}$ in the case $p=\infty$.
\end{proof}

\subparagraph{Band-dominated operators}
It is well known and easily checked that all limit operators of band-dominated 
operators are band-dominated. The already announced main advantage of limit operators 
in the Fredholm theory for band-dominated operators reads as follows:
\begin{thm}\label{TMainOpS}
Let $A$ be a rich band-dominated operator. Then the following are equivalent
\begin{itemize}
\item[(a)] $A$ is $\Pc$-Fredholm.
\item[(b)] all limit operators of $A$ are invertible.
\item[(c)] all limit operators of $A$ are invertible and 
			$\sup\{\|A_g^{-1}\|:A_g\in\sigma_{\op}(A)\}<\infty.$
\end{itemize}
\end{thm}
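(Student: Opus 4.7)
The plan is to establish the cycle (a) $\Rightarrow$ (c) $\Rightarrow$ (b) $\Rightarrow$ (a). The implication (a) $\Rightarrow$ (c) is exactly Theorem \ref{TLimOps}, and (c) $\Rightarrow$ (b) is trivial. The implication (c) $\Rightarrow$ (a) is the classical Rabinovich--Roch--Silbermann theorem (cf.\ \cite{LangeR, LimOps, Marko, LiChW, LiSe}): a $\Pc$-regularizer for $A$ is assembled by locally gluing together the inverses $A_g^{-1}$ via a partition of unity adapted to the band-dominated structure, with the uniform bound from (c) controlling the norm of the result. The substantive new point is therefore (b) $\Rightarrow$ (c), namely that the uniform bound on $\{\|A_g^{-1}\|\}$ is automatic once every limit operator is separately invertible.

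For (b) $\Rightarrow$ (c) I would argue by contradiction. Assume all $A_g$ are invertible and choose $(A_{g_k})\subset\sigma_{\op}(A)$ with $\|A_{g_k}^{-1}\|\to\infty$; equivalently $j(A_{g_k})\to 0$, so there are unit vectors $x_k\in\Xb$ with $\|A_{g_k}x_k\|\to 0$. Since $V_{-m}A_gV_m$ is again a limit operator of $A$ for every $m\in\Zb^N$ (because $V_{-m}A_gV_m=\plimn V_{-(g(n)+m)}AV_{g(n)+m}$), a preliminary shift reduces to the situation $\|P_0 x_k\|\geq\delta>0$ for some fixed $\delta$. Exploiting the $\Pc$-strong definition $A_{g_k}=\plimn V_{-g_k(n)}AV_{g_k(n)}$ together with the band-dominated decay of $A$ (which allows a controlled truncation $x_k\approx P_{j_k}x_k$), I select indices $n_k$ so that the points $h_k:=g_k(n_k)$ tend to infinity and $T_k:=V_{-h_k}AV_{h_k}$ still satisfies $\|T_k x_k\|\to 0$. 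By the richness of $A$, pass to a subsequence along which $A_h:=\plimn T_k$ exists in $\pb{\Xb}$; by transitivity, $A_h\in\sigma_{\op}(A)$.

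The critical final step is to extract, after passing to yet a further subsequence, a non-zero $\Pc$-strong limit $x_\infty$ of the $(x_k)$, so that the $\Pc$-strong convergence $T_k\to A_h$ combined with the smallness $\|T_k x_k\|\to 0$ and the localization of $x_k$ yields $A_h x_\infty=0$, contradicting the injectivity of $A_h$. The non-triviality of $x_\infty$ will come from the uniform lower bound $\|P_0 x_k\|\geq\delta$, and the extraction proceeds by a diagonal argument over the projections $P_j$. This compactness step is the main obstacle: for general Banach space $X$ the ranges $\im P_j$ are infinite-dimensional, so one must invoke an appropriate weak or $\Pc$-strong sequential compactness principle to secure a non-zero limit and to ensure that $A_h x_\infty=0$ holds in the norm sense; the rest of the contradiction is then a book-keeping exercise that exploits the locality inherent in the band-dominated framework.
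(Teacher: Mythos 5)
Your overall architecture coincides with the paper's: $(a)\Rightarrow(b)\wedge(c)$ is Theorem \ref{TLimOps}, $(c)\Rightarrow(a)$ is the classical Rabinovich--Roch--Silbermann result which the paper also only cites, and the whole weight rests on $(b)\Rightarrow(c)$, the ``Big Question''. The paper does not prove that implication from scratch either; it attributes it to the main theorem of \cite{LiSe}, namely that the infimum $\inf\{j(A_g):A_g\in\sigma_{\op}(A)\}$ is attained at some $C\in\sigma_{\op}(A)$, with Proposition \ref{PLocal} as the decisive ingredient. So your reduction to ``find a limit operator witnessing the degeneration of the lower norms'' is the right one, but the sketch you give of that step has a genuine gap, and it sits exactly where the real difficulty lies.

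Concretely, two linked steps fail. First, the claim that a preliminary shift reduces to $\|P_0x_k\|\geq\delta>0$ is not available: for $p<\infty$ a unit vector $x_k$ with $\|A_{g_k}x_k\|\to0$ may be spread over an arbitrarily large set (think of $n^{-1/p}\sum_{i=1}^n e_i$), so that \emph{every} translate puts arbitrarily little mass into any fixed window; no choice of recentring shift repairs this. Second, the ``appropriate weak or $\Pc$-strong sequential compactness principle'' you invoke to extract a nonzero $x_\infty$ with $A_hx_\infty=0$ does not exist in this generality, and even if some $\Pc$-strong limit existed one could not pass $\|T_kx_k\|\to0$ to $\|A_hx_\infty\|=0$ in norm without uniform control on the supports. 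The actual resolution in \cite{LiSe} is Proposition \ref{PLocal}: for band-dominated $A$ the lower norm of every $B\in\sigma_{\op}(A)\cup\{A\}$ is realized up to $\delta$ on unit vectors of support diameter at most $D$, with $D$ \emph{uniform} over the whole operator spectrum. With that, the recentred test vectors all lie in the fixed subspace $\im P_D$, and one concludes $j(A_h)\leq\|(A_h-T_k)P_D\|+\|T_kx_k\|\to0$ directly, contradicting the invertibility (hence bounded-belowness) of $A_h$ --- no limit vector and no compactness of $\im P_D$ is needed, which is essential since $\dim X$ may be infinite here. Without citing or reproving that localization property, your argument for $(b)\Rightarrow(c)$ does not close.
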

\pagebreak[3]
Obviously, $\textit{(a)}\Rightarrow (\textit{(b)} \wedge \textit{(c)})$
is due to Theorem \ref{TLimOps}. The implication $\textit{(c)}\Rightarrow \textit{(a)}$
was proved in e.g. \cite{LangeR, LimOps, Marko, LiChW, HabilMarko}. 
The question whether $\textit{(b)}$ is also sufficient for the two other conditions
was an open problem, the so-called ``Big Question'', since the beginning of the story 
and was solved only recently in \cite{LiSe}. For its proof one considers the lower 
norms of the limit operators $A_g$ instead of the norms of their inverses $A_g^{-1}$.
Then the main theorem of \cite{LiSe} states that there exists a limit operator 
$C\in\sigma_{\op}(A)$ with lower norm $j(C)=\inf\{j(A_g):A_g\in\sigma_{\op}(A)\}$.
Thus, the invertibility of $C$ already implies that \textit{all} $A_g$ are bounded 
below ,i.e. $j(A_g)>0$, even \textit{uniformly}.
A remarkable ingredient in its proof is given by the following localization property for
the lower norm of band-dominated operators:

\begin{prop} \label{PLocal} (\cite[Corollary 7]{LiSe})
Let $A\in\BDO(\Xb)$ and $\delta>0$. Then there exists a $D\in\Nb$ such that 
for every operator $B\in\sigma_{\op}(A)\cup\{A\}$ and every subset $F\subset\Zb^N$ 
there exists an $x\in\Xb$ with $\|x\|=1$, $\supp x \subset F$, $\diam\supp x\leq D$, 
i.e. the support of $x$ belongs to $F$ and its diameter is not greater than $D$, 
and such that $j(B)\leq\|Bx\|\leq j(B)+\delta$.
\end{prop}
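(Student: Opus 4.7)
The plan is to reduce $A$ to a band operator of finite band width and then run a partition-and-pigeonhole argument that localises a near-minimiser of the lower norm onto a single cube. Fix an auxiliary $\eta\in(0,\delta)$, small in terms of $\delta$, $\|A\|$, and $N$, and pick a band operator $\tilde A$ with $\|A-\tilde A\|\leq\eta$ and some finite band width $w$. For every $B\in\sigma_{\op}(A)\cup\{A\}$ let $\tilde B$ denote the corresponding band operator: either $\tilde A$ itself, or, when $B=A_h$, the $\Pc$-limit $\tilde A_h$ along the same sequence (existing on a subsequence, since band operators are easily seen to be rich). Then $\tilde B$ has band width at most $w$ and $\|B-\tilde B\|\leq\eta$, hence $|j(B)-j(\tilde B)|\leq\eta$, so it suffices to locate near-minimisers for $\tilde B$ and absorb an $\eta$-penalty at the end.

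The technical heart is the following estimate. Partition $\Zb^N$ into disjoint cubes $\{F_k\}$ of side length $L\gg w$ and decompose any $y\in\Xb$ as $y=\sum_k\chi_{F_k}y$. Because $\tilde B$ couples lattice points only within distance $w$, the images $\tilde B(\chi_{F_k}y)$ interfere only through the $w$-boundary layer of each cube---a fraction $O(w/L)$ of its volume---yielding
\begin{equation*}
\sum_k \bigl\|\tilde B(\chi_{F_k}y)\bigr\|^p \;\leq\; \|\tilde By\|^p + \varepsilon(L,w)\,\|\tilde B\|^p\,\|y\|^p,
\end{equation*}
with $\varepsilon(L,w)\to0$ as $L/w\to\infty$. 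Now choose $y$ with $\|y\|=1$ and $\|\tilde By\|\leq j(\tilde B)+\eta$. Since $\sum_k\|\chi_{F_k}y\|^p=1$ and $\|\tilde B(\chi_{F_k}y)\|\geq j(\tilde B)\,\|\chi_{F_k}y\|$ for each $k$, a straightforward pigeonhole argument supplies some $k_0$ with $\|\tilde B\chi_{F_{k_0}}y\|\leq(j(\tilde B)+\delta/2)\|\chi_{F_{k_0}}y\|$, provided $\eta$ is small and $L$ is large enough (both depending on $\delta$, $\|\tilde A\|$, $w$). The normalised vector $\hat x:=\chi_{F_{k_0}}y/\|\chi_{F_{k_0}}y\|$ then satisfies $\|\hat x\|=1$, $\diam\supp\hat x\leq L\sqrt{N}=:D$, and $\|B\hat x\|\leq\|\tilde B\hat x\|+\eta\leq j(B)+\delta$.

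The step I expect to cost most effort is forcing $\supp\hat x\subset F$, since the pigeonhole argument alone only delivers \emph{some} cube of the partition. This is where the limit-operator apparatus is decisive. For a limit operator $B=A_h\in\sigma_{\op}(A)$ one exploits that, for any $g\in\Zb^N$, the shifted operator $V_{-g}BV_g$ is itself a limit operator of $A$, obtained from $A$ along the translated sequence $h+g$, so by pre-shifting the defining sequence one can arrange the near-minimiser of $\tilde B$ to concentrate inside $F$ (when $F$ is large enough to accommodate a $D$-cube), after which the partition can be tailored so that the selected cube $F_{k_0}$ lies inside $F$. For $B=A$ itself one transfers the argument to an almost-minimising limit operator of $A$ --- exploiting richness of $A$ modulo passing to a subsequence --- and then pulls the resulting near-minimiser back through a sufficiently large shift $V_{h_n}$; since $\tilde A$ is a band operator, this transfer preserves the relevant norm estimates up to $O(\eta)$, and the conclusion follows with a single $D$ serving uniformly for all $B\in\sigma_{\op}(A)\cup\{A\}$ and all admissible $F$.
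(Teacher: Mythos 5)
First, a remark on the comparison itself: the paper does not prove this proposition but imports it from \cite[Corollary 7]{LiSe}, so there is no in-paper proof to measure you against; I assess your argument on its own. Your skeleton --- approximate by a band operator of width $w$, partition $\Zb^N$ into cubes of side $L\gg w$, and pigeonhole a near-minimiser of the lower norm onto a single cube --- is indeed the right general idea. But the inequality you call the ``technical heart'' is false as stated, and this is exactly where the genuine difficulty of the result sits. In
\begin{equation*}
\sum_k \bigl\|\tilde B(\chi_{F_k}y)\bigr\|^p \;\leq\; \|\tilde By\|^p + \varepsilon(L,w)\,\|\tilde B\|^p\,\|y\|^p
\end{equation*}
the excess on the left is controlled by the \emph{mass of $y$ on the $w$-boundary layers} of the cubes, not by the relative volume $O(w/L)$ of those layers; nothing prevents a near-minimiser from concentrating all of its mass on the boundaries of the one partition you fixed. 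Concretely, for $\tilde B=I-V_1$ on $l^2(\Zb,\Cb)$ (band width $1$) and $y=(e_{L-1}+e_L)/\sqrt2$ straddling a cube boundary, one computes $\sum_k\|\tilde B\chi_{F_k}y\|^2=2$ while $\|\tilde By\|^2=1$ and $\|y\|^2=1$, for every $L$; hence $\varepsilon(L,1)\geq 1/4$ no matter how large $L$ is. The missing idea --- and the crux of \cite{LiSe} --- is to work with a whole family of translated copies of the partition, chosen so that each lattice point lies in the boundary layer of only a small proportion of its members; an averaging argument then yields \emph{one} partition, depending on $y$, whose boundary layers carry only an $\varepsilon$-fraction of $\|y\|^p$, and only for that partition does your pigeonhole close. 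Without this, the proof collapses at its central step.

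Two further points. (a) The reduction of $B=A_h$ to $\tilde B=\tilde A_h$ rests on ``band operators are easily seen to be rich'', which fails when $\dim X=\infty$ (bounded sequences in $\Lc(X)$ need not have convergent subsequences), while the proposition is stated for arbitrary $X$; one should instead truncate $B$ itself to band width $w$ and bound the truncation error via the diagonals of $A-\tilde A$ (each single-diagonal extraction has norm at most $\|A-\tilde A\|$, giving an error $\leq(2w+1)^N\eta$). Likewise the $\ell^p$-summation over cubes needs a separate ($\sup$-based) treatment for $p=\infty$, a case the paper does use. (b) Your handling of the constraint $\supp x\subset F$ by pre-shifting cannot work when $F$ is bounded --- one cannot shift a limit operator ``into'' a finite window --- and in fact the conclusion with the global lower norm $j(B)$ is false for such $F$ (take $F$ a singleton and $B=I-V_1$: every unit vector supported on one point has $\|Bx\|=\sqrt2$). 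The version actually established in \cite{LiSe}, and the one invoked in the proof of Theorem \ref{TOneSided} with $F=\Zb\setminus\{-m,\dots,m\}$, compares $\|Bx\|$ with the lower norm of $B$ restricted to vectors supported in $F$. The clean route to that statement is to run the (corrected) partition argument directly on a near-minimiser already supported in $F$, so every piece $\chi_{F_k}y$ is automatically supported in $F$; then no shifting is needed and $B=A$ and $B\in\sigma_{\op}(A)$ are treated identically.
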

%

Here we are particularly interested in the case $\dim X<\infty$ for several reasons: 
Then it holds that band-dominated operators on $l^p(\Zb^N,X)$ with $\dim X<\infty$ are 
always rich (cf. \cite[Corollary 2.1.17]{LimOps} or \cite[Corollary 3.24]{Marko}). 
Moreover, an operator $A\in\pb{\Xb}$ is $\Pc$-Fredholm iff it is Fredholm 
in this case (The implication $\Leftarrow$ actually even holds in general 
by \cite[Corollary 12]{SeFre}, whereas the implication $\Rightarrow$ follows 
from $\pc{\Xb}\subset\lc{\Xb}$ which is a consequence of the condition $\dim X<\infty$).
Thus, Theorem \ref{TMainOpS} specifies as follows
\begin{thm}\label{TMainOpS2}
Let $A$ be a band-dominated operator on $l^p(\Zb^N,X)$ with $N\in\Nb$, 
$p\in \{0\}\cup [1,\infty]$, 
$\dim X<\infty$. Then $A$ is Fredholm if and only if all limit operators of $A$ are 
invertible. In that case their inverses are uniformly bounded.
\end{thm}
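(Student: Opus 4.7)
The plan is to treat Theorem~\ref{TMainOpS2} as a specialization of Theorem~\ref{TMainOpS} to the setting $\dim X<\infty$, using two structural reductions that the paper has already flagged. First, I would invoke the standard fact (cited from \cite[Corollary 2.1.17]{LimOps} or \cite[Corollary 3.24]{Marko}) that under $\dim X<\infty$ every band-dominated operator on $l^p(\Zb^N,X)$ is automatically rich, so the hypothesis of Theorem~\ref{TMainOpS} is available without an explicit assumption. Second, I would use the remark preceding the statement: $\dim X<\infty$ forces $\pc{\Xb}\subset\lc{\Xb}$, which (combined with the general implication $\Leftarrow$ from \cite[Corollary 12]{SeFre}) identifies $\Pc$-Fredholmness with honest Fredholmness for every operator in $\pb{\Xb}$, and in particular for every band-dominated $A$.

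With these two specializations in place, the equivalence in Theorem~\ref{TMainOpS2} together with the uniform bound on the inverses is essentially the chain $(a)\Leftrightarrow (b)\Leftrightarrow (c)$ from Theorem~\ref{TMainOpS}, reread through the dictionary ``$\Pc$-Fredholm $=$ Fredholm''. For $p\in\{0\}\cup[1,\infty)$ this is immediate. For $p=\infty$ I would insert Proposition~\ref{PPFred}, which identifies the Fredholm property of $A$ with that of $A|_{l^0}$, together with the matching identity $\sigma_{\op}(A|_{l^0})=\sigma_{\op}(A)|_{l^0}$ from Proposition~\ref{POpSFormulas}; invertibility of every $A_g$ then passes to invertibility of every $(A_g)|_{l^0}$ and back, so the $p=\infty$ case is routed through the already handled $p=0$ case.

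The heavy lifting does not really live in this theorem: it sits inside Theorem~\ref{TMainOpS}, whose nontrivial implication $(b)\Rightarrow (a)$ is the ``Big Question'' solved in \cite{LiSe} and leans on the localization Proposition~\ref{PLocal}. Consequently, the only genuine obstacle I would expect to meet here is the bookkeeping across the endpoint $p=\infty$: checking that the limit-operator correspondence of Proposition~\ref{POpSFormulas} is compatible with the Fredholm/$\Pc$-Fredholm identification, and that invertibility (not merely injectivity or surjectivity) of each restricted limit operator $(A_g)|_{l^0}$ is equivalent to invertibility of $A_g$ itself. Once those points are verified, no new analytic input beyond what the paper already supplies is needed.
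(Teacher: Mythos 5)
Your proposal is correct and follows essentially the same route as the paper, which obtains Theorem~\ref{TMainOpS2} by specializing Theorem~\ref{TMainOpS} via exactly the two facts you cite: automatic richness for $\dim X<\infty$, and the identification of $\Pc$-Fredholmness with Fredholmness coming from $\pc{\Xb}\subset\lc{\Xb}$ together with \cite[Corollary 12]{SeFre}. Your extra detour through Proposition~\ref{PPFred} and Proposition~\ref{POpSFormulas} for $p=\infty$ is harmless but unnecessary, since both specializing facts and Theorem~\ref{TMainOpS} itself already cover $p=\infty$ directly.
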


Actually, our observations of the previous section provide a further surprising
simplification and extension of this criterium:

\begin{thm}\label{TOneSided}
For a band-dominated operator $A$ on $l^p(\Zb,X)$, $p\in \{0\}\cup [1,\infty]$, 
$\dim X<\infty$, the following are equivalent
\begin{enumerate}
\item[(i)] $A$ is Fredholm.
\item[(ii)] All limit operators are invertible.
\item[(iii)] All limit operators are invertible from the left.
\item[(iv)] All limit operators are invertible from the right.
\item[(v)] All limit operators are bounded below.
\item[(vi)] All limit operators are surjective.
\end{enumerate}
In this case 
$\sup\{\|A_g^{-1}\|:A_g\in\sigma_{\op}(A)\}=(\inf\{j(A_g):A_g\in\sigma_{\op}(A)\})^{-1}<\infty.$
\end{thm}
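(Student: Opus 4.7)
The argument is cyclic and has easy and hard parts. Since $\|T^{-1}\|=j(T)^{-1}$ for invertible $T$, Theorem~\ref{TMainOpS2} gives $(i)\Rightarrow(ii)$ together with the quantitative norm identity at the end of the statement. The chain $(ii)\Rightarrow$ each of $(iii)$--$(vi)$ is obvious, and so are $(iii)\Rightarrow(v)$ (from $BA_g=I$ one reads off $j(A_g)\ge\|B\|^{-1}$) and $(iv)\Rightarrow(vi)$ (any right inverse produces pre-images). Thus the whole cycle closes once $(v)\Rightarrow(i)$ and $(vi)\Rightarrow(i)$ are established.

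For $(v)\Rightarrow(i)$ I would combine the main result of \cite{LiSe}, Proposition~\ref{PLocal}, Theorem~\ref{TSemi}, and Theorem~\ref{TMainOpS2}. First, \cite{LiSe} produces $C\in\sigma_{\op}(A)$ with $\alpha:=\inf_g j(A_g)=j(C)>0$. To propagate this lower bound back to $A$ itself, I apply Proposition~\ref{PLocal} to $B=A$: for $\delta>0$ with associated $D$, and for each $n$, one obtains a unit vector $x_n$ with $\supp x_n\subset\{|k|>n\}$, $\diam\supp x_n\leq D$, and $\|Ax_n\|\leq j(A)+\delta$. Pick $c_n\in\supp x_n$; the shifts $\tilde x_n:=V_{-c_n}x_n$ all lie in the finite-dimensional space $\im P_D$ (here $\dim X<\infty$ enters), so a subsequence converges in norm to a unit vector $y$. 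Richness of $A$ (automatic for band-dominated operators with $\dim X<\infty$) allows a further subsequence with $A_h:=\plimn V_{-c_n}AV_{c_n}\in\sigma_{\op}(A)$. Since $\tilde x_n=P_D\tilde x_n$ and $(V_{-c_n}AV_{c_n}-A_h)P_D\to 0$ in norm, one computes $\|A_h y\|=\lim\|V_{-c_n}Ax_n\|=\lim\|Ax_n\|\leq j(A)+\delta$, whence $\alpha\leq j(A_h)\leq j(A)+\delta$. Letting $\delta\downarrow 0$ yields $j(A)\geq\alpha>0$, so $A$ is bounded below and therefore lies in $\Phi_+$. Theorem~\ref{TSemi} upgrades this to $A\in\Phi$, and Theorem~\ref{TMainOpS2} then delivers $(ii)$.

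For $(vi)\Rightarrow(i)$ I argue by duality. If $p<\infty$ then $A^*\in\BDO(\Xb^*)$; Proposition~\ref{POpSFormulas} together with \eqref{EDualMod} identifies the limit operators of $A^*$ with the adjoints $(A_g)^*$, and surjectivity of each $A_g$ forces $q(A_g)=j((A_g)^*)>0$, so $A^*$ satisfies $(v)$, is therefore Fredholm by the step just proved, and consequently so is $A$. When $p=\infty$ the dual exits the $l^p$-scale; here I would pass via Proposition~\ref{PPFred} to $A|_{l^0}$ on $l^0$, identify $\sigma_{\op}(A|_{l^0})=\sigma_{\op}(A)|_{l^0}$ using Proposition~\ref{POpSFormulas}, and verify that surjectivity of each $A_g$ on $l^\infty$ implies enough on $(A_g)|_{l^0}$ to invoke $(v)\Rightarrow(i)$ for $A|_{l^0}$. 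The principal obstacle I anticipate is exactly this $p=\infty$ transfer step: without an honest adjoint the one-sided information must be propagated inside the $\Pc$-framework, relying on the inverse-closedness of $\BDO$ and on the careful bookkeeping between $A$ and $A|_{l^0}$ recorded in Propositions~\ref{PPFred} and~\ref{POpSFormulas}.
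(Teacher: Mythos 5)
Your overall architecture coincides with the paper's (close the cycle through $(v)\Rightarrow(i)$ and $(vi)\Rightarrow(i)$ using \cite{LiSe}, Proposition \ref{PLocal}, Theorem \ref{TSemi}, and duality resp.\ restriction to $l^0$), and your $(vi)\Rightarrow(i)$ for $p<\infty$ is sound. However, your argument for $(v)\Rightarrow(i)$ proves too much and is therefore broken: you conclude $j(A)\geq\inf\{j(A_g):A_g\in\sigma_{\op}(A)\}>0$, i.e.\ that $A$ itself is bounded below. That inequality is false in general. Take $A=\chi_{\Zb\setminus\{0\}}I$ on $l^2(\Zb,\Cb)$: its only limit operator is $I$, so the infimum on the right equals $1$, yet $j(A)=0$ (and $A$ is Fredholm but not injective, so no correct proof of the theorem can yield injectivity of $A$). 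The flaw is in your use of Proposition \ref{PLocal}: the quantity that is attained, up to $\delta$, by unit vectors supported in $F$ with $\diam\supp x\leq D$ is the lower norm of the \emph{restriction} of $A$ to sequences supported in $F$ (i.e.\ $j(AQ_n)$ when $F=\{|k|>n\}$), not the global lower norm $j(A)$; for $F$ far from the origin there is no reason that a near-minimizer of $\|Ax\|/\|x\|$ can be chosen with support in $F$. (The wording of Proposition \ref{PLocal} invites this misreading, but the way it is used in the paper's proof makes the intended meaning unambiguous.)

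The paper's proof of $(v)\Rightarrow(i)$ therefore runs by contradiction from the failure of $\Phi_+$ rather than by bounding $j(A)$ from below. If $A\notin\Phi_+$, then $\dim\ker A=\infty$ or $A$ is not normally solvable, and by Proposition \ref{PANBound} and Corollary \ref{CANNormSolv} the same holds for every restriction $AQ_m:\im Q_m\to\Xb$, whence $j(AQ_m)=0$ for all $m$. Only now does Proposition \ref{PLocal} produce unit vectors $x_m\in\im Q_m$ with $\diam\supp x_m\leq D$ and $\|Ax_m\|\leq\epsilon/2$, where $\epsilon=\min\{j(A_g)\}$; centralizing shifts, compactness of the unit sphere of $\im P_D$ and richness then yield a limit operator $A_g$ and a unit vector $y$ with $j(A_g)\leq\|A_gy\|\leq\epsilon/2<\epsilon\leq j(A_g)$, a contradiction. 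The conclusion is only $A\in\Phi_+$ (possibly with nontrivial kernel), and Theorem \ref{TSemi} is then genuinely needed to upgrade this to Fredholmness. Finally, your $(vi)\Rightarrow(i)$ for $p=\infty$ is an acknowledged sketch; the missing step in the paper is to apply Theorem \ref{TSemi} to each surjective (hence $\Phi_-$) limit operator $A_g$ to make it Fredholm, and then to use the cokernel identity in Proposition \ref{PPFred} to conclude that $A_g|_{l^0}$ is surjective on $l^0$, after which the already established case $p=0$ applies to $A|_{l^0}$.
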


\begin{proof}
Theorem \ref{TLimOps} provides $\textit{(i)}\Rightarrow \textit{(ii)}$ as well as the 
uniform boundedness of the inverses. Also recall that $\|B^{-1}\|=(j(B))^{-1}$ for 
every invertible operator $B$. The implication
$\textit{(ii)}\Rightarrow (\textit{(iii)} \wedge \textit{(iv)})$ is obvious. Moreover, 
$\textit{(iii)}\Rightarrow \textit{(v)}$ and $\textit{(iv)}\Rightarrow \textit{(vi)}$ 
are clear as well. 

Let all limit operators be bounded below. i.e. $\textit{(v)}$. As already mentioned, 
the main theorem of \cite{LiSe} states that there exists a $C\in\sigma_{\op}(A)$ 
with $\epsilon:=j(C)=\min\{j(A_g):A_g\in\sigma_{\op}(A)\}$. Assume that $A$ has 
infinite dimensional kernel or is not normally solvable. Taking Proposition 
\ref{PANBound} and Corollary \ref{CANNormSolv} into account, the same holds true 
for the restrictions $AQ_m:\im Q_m\to\Xb$, hence $j(AQ_m)=0$ for every $m$. 
By Proposition \ref{PLocal} there is a $D$ such that for every $m$ there exists 
$x_m\in\im Q_m$, $\|x_m\|=1$, $\diam\supp x_m \leq D$ and $\|Ax_m\|\leq \epsilon/2$. 
Now, for every $m$, choose ``centralizing shifts'' $V_{s_m}$ such that 
$Q_DV_{-s_m}x_m=0$, i.e. the shifted copies $y_m:=V_{-s_m}x_m$ belong to the 
finite dimensional space $\im P_D$. We can pass to a subsequence $(y_{m_l})$ 
which converges to a point $y\in\im P_D$. Clearly $\|y\|=1$.
Since $A$ is rich, we can pass to a subsequence of $g:=(s_{m_l})$ such that 
the respective limit operator of $A$ exists. W.l.o.g. let $g$ already be this 
sequence. But then, for sufficiently large $l$, we obtain a contradiction:
\begin{align*}
j(A_g)\leq\|A_gy\|&\leq\|(A_g-V_{-s_{m_l}}AV_{s_{m_l}})P_D\|\|y\|
+\|V_{-s_{m_l}}AV_{s_{m_l}}(y-y_{m_l})\|+\|Ax_{m_l}\|\\
&< \|(A_g-V_{-s_{m_l}}AV_{s_{m_l}})P_D\|+\|A\|\|y-y_{m_l}\|+\epsilon/2
\to_{l\to\infty}\epsilon/2< j(A_g).
\end{align*}
Thus $\textit{(v)}\Rightarrow A\in\Phi_+$ and, by Theorem \ref{TSemi},
$A\in\Phi_+\Rightarrow \textit{(i)}$ also holds and finishes this step.

Finally, let $\textit{(vi)}$ hold. Then all  $A_g$ are in $\Phi_-$ and, by 
Theorem \ref{TSemi}, all $A_g$ are Fredholm. 
In case $p<\infty$ we get that all $A_g^*$ are Fredholm and injective,
and constitute the operator spectrum of $A^*$ (cf. Proposition \ref{POpSFormulas}).
The implication $\textit{(v)}\Rightarrow \textit{(i)}$, applied 
to $A^*$, yields that $A^*$ is Fredholm, hence $A$ is Fredholm as well. In 
case $p=\infty$ we obtain from Proposition \ref{PPFred} that the restrictions 
$A_g|_{l^0}$ are Fredholm and surjective on $l^0$ as well, and these $A_g|_{l^0}$ 
constitute the operator spectrum of $A|_{l^0}$ (cf. Proposition \ref{POpSFormulas}). 
By the already proved case the restriction $A|_{l^0}$ is Fredholm 
and again by Proposition \ref{PPFred} also $A$ is Fredholm, that is we have finished 
$\textit{(vi)}\Rightarrow \textit{(i)}$.
\end{proof}

Notice that the above Examples \ref{E1}, \ref{E2} demonstrate that these one-sided 
characterizations are limited to the cases $l^p(\Zb,X)$, $\dim X<\infty$.
Further, the surjectivity in $\textit{(vi)}$ can be replaced by injectivity only in
a particular situation, but not in general:
\begin{prop}
For every band-dominated operator $A$ on $l^\infty(\Zb,X)$ with $\dim X<\infty$, the
following are equivalent
\begin{enumerate}
\item[(i)] $A$ is Fredholm.
\item[(vii)] All limit operators are injective.
\end{enumerate}
In the cases $p<\infty$ this is not true.
\end{prop}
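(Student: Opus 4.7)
The implication $\textit{(i)}\Rightarrow\textit{(vii)}$ follows immediately from Theorem~\ref{TMainOpS2}: Fredholmness of $A$ forces every limit operator to be invertible, hence injective. For the converse on $l^\infty$, my plan is to reduce to the restriction $A|_{l^0}$ via Proposition~\ref{PPFred} and then apply the implication $\textit{(v)}\Rightarrow\textit{(i)}$ of Theorem~\ref{TOneSided} to $A|_{l^0}$. Since $\sigma_{\op}(A|_{l^0})=(\sigma_{\op}(A))|_{l^0}$ by Proposition~\ref{POpSFormulas}, the task reduces to showing $j(A_g|_{l^0})>0$ for every $A_g\in\sigma_{\op}(A)$.

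Suppose for contradiction that $j(A_g|_{l^0})=0$ for some $g$; since $l^0\subset l^\infty$, then also $j(A_g)=0$ on $l^\infty$. Applying the localization Proposition~\ref{PLocal} to the band-dominated operator $A_g$ with $\delta_n\to 0$ yields unit vectors $x_n$ with supports of bounded diameter $\leq D$ and $\|A_g x_n\|\to 0$; such $x_n$ are finitely supported, in particular in $l^0$. Recentering by shifts $V_{-c_n}$ places the vectors $y_n:=V_{-c_n}x_n$ inside the finite-dimensional subspace $P_D l^\infty$, so a subsequence converges to some unit vector $y\in l^0\subset l^\infty$. Since $\dim X<\infty$ ensures $A_g$ is rich, a further subsequence of $(c_n)$ makes $V_{-c_n}A_g V_{c_n}$ converge $\Pc$-strongly to some $C\in\sigma_{\op}(A_g)$, and transitivity of the limit-operator construction gives $C\in\sigma_{\op}(A)$. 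The three-term estimate
\begin{equation*}
\|Cy\|\leq \|(C-V_{-c_n}A_g V_{c_n})P_D\|\,\|y\|+\|A_g\|\,\|y-y_n\|+\|A_g x_n\|
\end{equation*}
forces $Cy=0$; combined with $y\neq 0$ this contradicts hypothesis $\textit{(vii)}$ applied to $C$. The delicate step is coordinating the two subsequence extractions — one producing the nonzero limit $y$ inside a fixed finite-dimensional window, the other producing a genuine limit operator $C$ — and both rely essentially on $\dim X<\infty$.

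For the counterexample when $p<\infty$, I would take the shift-commuting band operator $A:=I-V_1$ on $l^p(\Zb,\Cb)$. The identity $V_{-h}AV_h=A$ makes $\sigma_{\op}(A)=\{A\}$, and $\ker A$ consists of constant sequences, none of which belong to $l^p$ for $p<\infty$; hence both $A$ and its unique limit operator are injective, so $\textit{(vii)}$ holds. However, the sequence $\chi_{\{0\}}$ is not in the range of $A$ — any preimage must be eventually constant on both sides of $0$ with values differing by $1$, which is incompatible with membership in $l^p$ — so $A$ is not invertible, and Theorem~\ref{TMainOpS2} forces $A$ to fail being Fredholm.
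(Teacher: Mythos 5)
Your direction \textit{(i)}$\Rightarrow$\textit{(vii)} and the counterexample $A=I-V_1$ for $p<\infty$ are both correct, and the latter is exactly the paper's example. The converse on $l^\infty$, however, has a genuine gap at the point where you invoke Proposition~\ref{PLocal} ``with $\delta_n\to 0$'' to produce unit vectors $x_n$ with $\|A_gx_n\|\to 0$ and $\diam\supp x_n\leq D$ for a \emph{single} $D$. In that proposition $D$ depends on $\delta$, and in general $D(\delta)\to\infty$ as $\delta\to 0$. Under the mere injectivity of the limit operators there is no positive $\epsilon$ below which you never need to push $\delta$; such an $\epsilon$ is precisely what hypothesis \textit{(v)} supplies in the proof of Theorem~\ref{TOneSided}, where $\delta=\epsilon/2$ is fixed once and for all and a single $D$ then suffices. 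Without a uniform $D$ the recentred vectors $y_n$ do not live in one fixed finite-dimensional window $\im P_D$, the Bolzano--Weierstrass extraction of a norm-one limit $y$ collapses, and with it the entire contradiction. A telling internal check: your argument nowhere uses $p=\infty$, so if it were sound it would apply verbatim to $A=I-V_1$ on $l^p(\Zb,\Cb)$, $p<\infty$ --- your own counterexample. Indeed, for $I-V_1$ every unit vector supported in a window of diameter $D$ satisfies a lower bound $\|(I-V_1)x\|\geq c_D\|x\|$ with $c_D>0$, so almost-kernel vectors of uniformly bounded support simply do not exist there, even though $j(I-V_1)=0$.

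The paper's proof is organized precisely around this obstruction. It takes $x_m\in\im Q_m$ with $\|Bx_m\|\leq 1/m$ and bounded, pairwise disjoint, but \emph{not uniformly bounded} supports, and then exploits two features specific to $p=\infty$: first, the sup-norm is witnessed at a single coordinate, so one can recentre so that $\|P_1y_{m_l}\|=\|x_{m_l}\|=1$ (for $p<\infty$ the mass of $x_m$ may spread out over a long support and every fixed window of the recentred copies could tend to $0$); second, $l^\infty$ is sequentially complete with respect to $\Pc$-convergence, so a diagonal argument over the windows $P_j$ produces a genuine norm-one element $y\in l^\infty$ --- in general with unbounded support, hence possibly outside $l^0$ --- satisfying $Cy=0$ for a suitable limit operator $C$. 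This last point also undercuts your proposed reduction to $l^0$ via Proposition~\ref{PPFred}: the kernel element one actually obtains need not lie in $l^0$, so the contradiction must be drawn against injectivity on $l^\infty$ itself.
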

\begin{proof}
The implications $\textit{(i)}\Rightarrow \textit{(v)}\Rightarrow \textit{(vii)}$ 
are clear for all $p$. 
The operator $A=I-V_1$ is shift invariant, hence $\sigma_{\op}(A)=\{A\}$. Furthermore,
in every case $p<\infty$, $A$ is injective on $l^p(\Zb,\Cb)$, but not Fredholm. Thus, 
$\textit{(vii)}\Rightarrow\textit{(i)}$ is false in these cases.

In the case $p=\infty$ the situation is different, for two reasons: the particular
properties of the $l^\infty$-norm, and the fact that if for a bounded sequence 
$(y_n)_n\subset l^\infty$ the truncated sequences $(P_jy_n)_n$ converge for every $j\in\Nb$
then the limits $P_jy=\lim_{n\to\infty}P_jy_n$ always define a unique element $y\in l^\infty$ 
(one says that $l^\infty$ is sequentially complete w.r.t. $\Pc$-convergence).

Assume that all limit operators of $A$ are injective, but there exists one 
$B\in\sigma_{\op}(A)$ which is not normally solvable. As in the proof of Theorem 
\ref{TOneSided} we have that for every $m$ also the restrictions $BQ_m$ are not normally 
solvable, thus there always exists an element $x_m\in\im Q_m$ with $\|x_m\|=1$, 
$\|Bx_m\|\leq 1/m$, and with bounded support. Inductively we choose sequences $g:=(m_l)$ 
and $(x_{m_l})$ of that type and such that the supports of these $x_{m_l}$ are pairwise 
disjoint. Due to the definition of the $l^\infty$-norm there exist shifted copies 
$y_{m_l}:=V_{-s_{m_l}}x_{m_l}$ such that $\|P_1y_{m_l}\|=\|x_{m_l}\|=1$.
By a Bolzano Weierstrass argument, we can pass to a subsequence $g^1=(g_l^1)$ of $g$ 
for which the sequence $(P_1y_{g^1_l})_l$ converges. Further, there is a subsequence 
$g^2$ of $g^1$ such that $(P_2y_{g^2_l})_l$ converges, etc. Continuing this, we obtain 
a family of nested sequences $g^1\supset g^2\supset\ldots\supset g^k\supset\ldots$
such that $(P_jy_{g^k_l})_l$ converge for all $1\leq j\leq k$. Define a new sequence
$h=(h_n)$ by $h_n:=g^n_n$. Then $(P_jy_{h_n})_n$ converge for each $j$ and as mentioned
above this defines a unique element $y\in l^\infty$ with 
$P_jy=\lim_{n\to\infty}P_jy_{h_n}$ for every $j$ and, by the construction of the $y_{h_n}$, 
also $\|y\|=1$.

Moreover, there is a subsequence of $(s_{h_n})_n$ such that the limit operator $C$ 
of $B$ w.r.t. this subsequence exists. W.l.o.g. let already $(s_{h_n})$ be this 
subsequence. Then $C$ is also a limit operator of $A$, and our final step is to prove 
$Cy=0$, which yields a contradiction. Indeed, with $t_n:=s_{h_n}$, for all $j\in\Nb$
\begin{align*}
\|P_jCy\|
&=\|P_j(C-V_{-t_n}BV_{t_n}+V_{-t_n}BV_{t_n}Q_{k})y
+P_jV_{-t_n}BV_{t_n}P_{k}(y-y_{h_n})+P_jV_{-t_n}BV_{t_n}P_{k}y_{h_n}\|\\
&\leq\|P_j(C-V_{-t_n}BV_{t_n})\|+\|P_jV_{-t_n}BV_{t_n}Q_{k}\|
+\|B\|\|P_{k}(y-y_{h_n})\|+\|P_jV_{-t_n}BV_{t_n}P_{k}y_{h_n}\|.
\end{align*}
Clearly the first term tends to zero as $n\to\infty$. The second one is smaller than 
any prescribed positive $\epsilon$ if $k$ is sufficiently large (note that $B$ is 
band-dominated hence can be approximated in the norm by band operators), and the 
third term goes to zero as well for every fixed $k$ as $n\to\infty$. The last 
one can be estimated by
$\|P_jV_{-t_n}BV_{t_n}P_{k}y_{h_n}\|\leq\|P_jV_{-t_n}BV_{t_n}Q_{k}\|+\|Bx_{h_n}\|$. 
Here the first part is again smaller than any prescribed positive $\epsilon$ if $k$ 
is sufficiently large, and the last one tends to $0$ as $n\to\infty$. This finishes
the proof.
\end{proof}
This characterization $\textit{(vii)}$ in the $l^\infty$-case is known as Favard's 
condition, and was studied even in more general situations, e.g. for $A=I-K$ on 
$l^\infty(\Zb,X)$ with arbitrary Banach spaces $X$ and $K$ having collectively compact 
entries, see \cite{LiChWFavard, LiChW, SeFre}. Probably, also the observations of the 
present work have respective extensions for such operators $A=I-K$, but this shall be 
a topic of some future work.

\begin{ex}
Finally, one might ask, whether
\textit{
\begin{itemize}
\item[(viii)] All limit operators are one sided invertible.
\end{itemize}}
\noindent
i.e. a mixed version of $\textit{(iii)}$ and $\textit{(iv)}$, may also be an equivalent 
condition. 
This is false as the following example demonstrates: Let $I_n,U_n\in\Cb^{n\times n}$ 
be the identity matrix and the shift by one position 
to the right, respectively. Define $A:=\diag(\chi_{-}I,I_1,U_1,I_2,U_2,I_3,U_3,\ldots)$ on
the space $l^2(\Zb,\Cb)$. Then the operator spectrum of $A$ consists of the identity $I$, 
the bilateral shift $V_1$ and all shifted copies of the two operators $\chi_-V_1 +\chi_+I$, 
$V_1\chi_+I +\chi_-I$. Clearly, each of these limit operators is one sided invertible, but 
$A$ is not Fredholm.
\end{ex}



\end{document}